\definecolor{darkgreen}{rgb}{0,0.5,0}
\theoremstyle{plain}
\newtheorem{thm}{Theorem}[section]
\newtheorem{prop}[thm]{Proposition}
\theoremstyle{definition}
\newtheorem{defn}[thm]{Definition}
\newtheorem{eg}[thm]{Example}
\theoremstyle{remark}
\newtheorem{rem}[thm]{Remark}
\def\presuper#1#2%
\newcommand{\V}{{\operatorname{Vol}}}
\newcommand\Vint{\presuper\V\int}
\newcommand{\lV}{{\operatorname{Vol\ }}}
\newcommand\lVint{\presuper\lV\int}
\newcommand{\dR}{{\operatorname{dR}}}
\newcommand{\Supp}{{\operatorname{Supp}}}
\newcommand{\CG}{{\operatorname{CG}}}
\newcommand{\Res}{{\operatorname{Res}}}
\newcommand{\Div}{{\operatorname{Div}}}
\renewcommand{\div}{{\operatorname{div}}}
\newcommand*\angles[1]{\langle #1 \rangle}
\newcommand*\Z{\mathbb{Z}}
\newcommand*\Q{\mathbb{Q}}
\newcommand*\F{\mathbb{F}}
\renewcommand*\P{\mathbb{P}}
\newcommand*\Qp{\Q_p}
\renewcommand{\O}{\mathcal{O}}
\newcommand{\cO}{\mathcal{O}}
\newcommand{\B}{\mathcal{B}}
\newcommand*\cD{\mathcal{D}}
\newcommand*\cL{\mathcal{L}}
\numberwithin{equation}{section}
\definecolor{lightyellow}{RGB}{255, 255, 197}
\title[Algorithms for $p$-adic Heights]{Algorithms for $p$-adic Heights on Hyperelliptic Curves \\ of Arbitrary Reduction}
\subjclass[2020]{Primary: 14G40. Secondary: 11G50, 11S80, 14G05, 14F30.}
\author[Bianchi]{Francesca Bianchi}
\address{\hspace{-.2in} F. Bianchi}
\email{francesca.bianchi.maths@gmail.com}
\author[Kaya]{Enis Kaya}
\address{\hspace{-.2in} E. Kaya, Department of Mathematics, KU Leuven, Celestijnenlaan 200B, 3001 Heverlee, Belgium}
\email{enis.kaya@kuleuven.be}
\author[M\"uller]{J. Steffen M\"uller}
\address{\hspace{-.2in} J. S. M\"uller,  Bernoulli Institute, University of Groningen,  Nijenborgh 9,  9747 AG Groningen, The Netherlands}
\email{steffen.muller@rug.nl}
\date{\today}
\begin{document}

\maketitle

\begin{abstract}
In this paper, we develop an algorithm for computing Coleman--Gross (and hence Nekov\'a\v{r}) $p$-adic heights on hyperelliptic curves over number fields with arbitrary reduction type above $p$. This height is defined as a sum of local heights at each finite place and we use algorithms for Vologodsky integrals, developed by Katz and the second-named author, to compute the local heights above $p$. We also discuss an alternative method to compute these for odd degree genus~2 curves via $p$-adic sigma functions, via work of the first-named author. For both approaches one needs to choose a splitting of the Hodge filtration. A canonical choice for this is due to Blakestad in the case of an odd degree curve of genus~$2$ that has semistable ordinary reduction at $p$. We provide an algorithm to compute Blakestad's splitting, which is conjecturally the unit root splitting for the action of Frobenius. We give several numerical examples, including the first worked quadratic Chabauty example in the literature for a curve with bad reduction.
\end{abstract}

\setcounter{tocdepth}{1}
\tableofcontents

\section{Introduction}

Fix once and for all a prime number $p$. In the literature, there are several
definitions of $p$-adic height pairings on abelian varieties over number
fields, for instance due to Mazur--Tate \cite{MT83}. These are bilinear maps $A(F)\times A(F) \to \Q_p$, 
where $A$ is an abelian variety over a number field $F$.
Most of these constructions are quite similar to constructions of the
real-valued N\'eron--Tate height pairing.
Algorithms for computing $p$-adic heights 
\begin{itemize}
    \item allow one to compute $p$-adic regulators, some of which fit into
      $p$-adic versions of the Birch and Swinnerton--Dyer conjecture (\cite{MTT86,MST06,stein2013algorithms,BMS16}), and
    \item play a crucial role in carrying out the quadratic Chabauty method
      for integral (see \cite{balakrishnan2016quadratic}) and rational
      (see~\cite{BD18}) points on curves.
\end{itemize}
The $p$-adic height pairing $h$ constructed by Coleman--Gross \cite{CG89},
Colmez~\cite{colmez1998integration} and Besser \cite{BesserPairing} for the
case of a 
Jacobian variety $J$ of a curve $C/F$ has the advantage that it can be described
solely in terms of $C$. In order to define this pairing (see \S\ref{sec:CG_height}),
we first need to choose, as for most constructions of $p$-adic
heights,
\begin{itemize}
    \item a continuous id\`ele class character $\chi = (\chi_v)_{v}\colon \mathbb{A}_F^\times/F^\times \to \Qp$, and
    \item for each $v$ such that $\chi_v$ is ramified, a subspace $W_v\subset H_{\dR}^1(C_v/F_v)$ that is complementary to the subspace of holomorphic forms.
\end{itemize}
Here $F_v$ is the completion of $F$ at a finite place $v$ and $C_v$ is the
base change of $C$ to $F_v$. 
When the Jacobian variety $J_v$ of $C_v$ has semistable ordinary reduction, there is a natural choice for $W_v$, namely the unit root subspace for the action of the Frobenius endomorphism. 

The pairing $h$ is, by definition, a sum of
local height pairings $h_v$ between divisors on $C_v$ of degree~0 with
disjoint support, and the nature of $h_v$ depends on $\chi_v$:
\begin{itemize}
    \item When $\chi_v$ is unramified, $h_v$ is defined in terms of
      arithmetic intersection theory (see~\eqref{hvunram}) and its computation 
       is standard; see Remark~\ref{R:comp_away}. 
    \item When $\chi_v$ is ramified, on the other hand, the local pairing
      $h_v$ is defined using Vologodsky's theory of $p$-adic integration
      \cite{vologodsky2003hodge}. It can be computed as explained in
      \cite[\S5]{balakrishnan2012computing} and~\cite{GM23} if $C$ is a
      hyperelliptic curve of good reduction at $v$, where the Vologodsky
      integrals are in fact Coleman integrals. In
      Section~\ref{sec:Comp_CG_height}, we explain how to remove the good
      reduction assumption from this setting (see
      Algorithm~\ref{alg:CompCG}) using the algorithms in~\cite{Kaya,
      KatzKaya} developed by Katz and the second-named author.
Since by work of
Besser~\cite{Bes04, BesserPairing}, $h_v$ is equivalent to a
special case of the local $p$-adic height constructed by
    Nekov\'a\v{r}~\cite{Nek93} using $p$-adic Hodge theory, we also obtain an algorithm to
compute the latter on hyperelliptic curves.
\end{itemize}

In Section~\ref{sec:4}, we discuss the case of a genus~2 curve $C/F$. In
this case, the first-named author showed in~\cite{Bia23}
(using work of Colmez~\cite{colmez1998integration}) that $h_v$ can be
expressed in terms of $v$-adic sigma functions constructed
in~\cite[Section~3]{Bia23}, extending work of  Blakestad~\cite{blakestadsthesis}. 
This leads to an alternative algorithm to compute
$h_v$, implemented by the first-named author and available from~\cite{BiaCod}. 
%In our earlier work~\cite{BKM23}, we extended this to certain divisors with
%\textit{common} support which are crucial for the quadratic Chabauty method
%for integral points~\cite{balakrishnan2016quadratic}.
%The comparison is phrased in terms of $p$-adic N\'eron
%functions; for odd degree genus~2 curves, the latter can be expressed in terms of
%$v$-adic sigma functions.
In our earlier work~\cite{BKM23}, we extended some of the results in~\cite{Bia23} by expressing an extension of the Coleman--Gross height to certain divisors with \textit{common} support in terms of $v$-adic sigma functions. This then leads to an algorithm for the computation of such heights.

We also showed in~\cite{BKM23} that on general abelian varieties, $p$-adic N\'eron functions
give a local decomposition of a special case of the $p$-adic height
constructed by Mazur and Tate~\cite{MT83}, depending on the same data as
$h$. When $\chi_v$ is ramified and the abelian variety has semistable
ordinary reduction at $v$,
then Mazur and Tate constructed a \textit{canonical} local height pairing,
which is of particular importance; for instance, the $p$-adic regulator
in the $p$-adic Birch and Swinnerton-Dyer conjecture of
Mazur--Tate--Teitelbaum for elliptic curves with good ordinary and non-split
multiplicative reduction is defined in terms of it. 
When $J$ is the Jacobian of a genus $2$ curve $C/F$ 
given by a quintic
model that is semistable at $v$ such that $J$ has ordinary reduction at $v$, Blakestad
\cite{blakestadsthesis} constructed a complementary subspace 
$W^{C}_v$ (see~\S\ref{S:Blakestad}) such that the associated local Coleman--Gross height 
gives the canonical local Mazur--Tate height
(see~\cite[Theorem~5.23]{BKM23}).
As shown by the first-named author, $W^{C}_v$ is in fact the unit
root subspace when the reduction is good ordinary; see
\cite[Proposition~3.8]{Bia23}. We believe that this is still the case if
reduction is semistable ordinary; see \cite[Conjecture~3.17]{BKM23}. 
In \S\ref{sec:CompBlakSubs}, we explain how to compute $W^{C}_v$ numerically.

In Section~\ref{sec:Num_Exa}, we provide a number of numerical examples to
illustrate our algorithms. We show that various identities of
global heights predicted by identities in the Jacobian and 
comparison results between different constructions of local $p$-adic
heights hold to our working precision. Moreover, we describe the first
worked quadratic Chabauty example for a curve with bad reduction at $p$. Namely,
we show in Example~\ref{QCex} that the integral points on the genus~2 curve
$C\colon y^2 = x^5 + x^3 - 2x + 1$ are precisely $(0,\pm1), 
(\pm1, \pm1)$ by running quadratic Chabauty, as described
in~\S\ref{subsec:QC}, at the prime $p=5$ of bad reduction for $C$. This
example uses the techniques from~\cite{Bia23}.
Our examples are computed using \texttt{SageMath} \cite{SageMath} and \texttt{Magma} \cite{Magma}. Our code can be found at 
\[
\text{\url{https://github.com/KayaEnis/padicHeights}}.
\]

\subsection{Notation} Let $F$ denote a number field with ring of integers $\O_{F}$. For a finite place $v$ of $F$, we write $F_v$ and $\O_{v}$ for the completions at $v$. For each such $v$,
fix a uniformizer $\pi_v$ of $\O_v$ and let $\log_v\colon \O_v^{\times} \to F_v$ be the unique homomorphism extending
\[\log_v\colon 1 + \pi_v\O_v \to F_v,\ \ \ 1+z \mapsto \sum_{n=1}^{\infty} \frac{(-1)^{n+1}z^n}{n}.\]

Let $C$ be a smooth, projective and geometrically integral curve defined over $F$, and let $J/F$ be its Jacobian variety. By $\Div^0(C)$, we mean the group of degree $0$ divisors on $C$ defined over $F$.
For a finite place $v$ of $F$, we denote $C_v := C\otimes F_v$.

\subsection*{Acknowledgements} The authors would like to thank Jennifer
Balakrishnan, Amnon
Besser, Clifford Blakestad, Stevan
Gajovi\'c, Timo Keller, Oliver Lorscheid and Padmavathi Srinivasan 
for valuable discussions and comments. They also would like to thank the anonymous referees for several useful suggestions. F.B. and S.M. were supported by NWO grant VI.Vidi.192.106. E.K. was supported by NWO grant 613.009.124 and by FWO grant GYN-D9843-G0B1721N during various stages of this article. 

Parts of this paper have already appeared in the PhD dissertation \cite{KayaPhD} written by the second-named author at the University of Groningen under the supervision of the third-named author.

\section{Coleman--Gross heights}
\label{sec:CG_height}

The construction of the (extended) Coleman--Gross height pairing \cite{CG89,BesserPairing} is described in detail in \cite[\S2]{BKM23}. In this section, we give a brief summary for the convenience of the reader.

The pairing, which we denote by $h$, is a function from $\Div^0(C) \times \Div^0(C)$ to $\Qp$. It depends on some choices. First, let $\chi = (\chi_v)_{v}\colon \mathbb{A}_F^\times/F^\times \to \Qp$
be a continuous id\`ele class character. If $\chi_v$ is unramified, then
by continuity, $\chi_v$ is determined by $\chi_v(\pi_v)$. On the other hand, if $\chi_v$
is ramified, then we have $v\mid p$ and we may decompose $\chi_v$ on $\cO_{v}^\times$ as 
\begin{equation}\label{BranchOfP-adicLogComingFromEllv}
\chi_v = t_v\circ \log_v\,,
\end{equation}
 where $t_v$ is a $\Qp$-linear map from $F_v$ to $\Qp$. 
 Then there is a unique branch  $\log_v\colon F_v^\times \to F_v$ such
 that~\eqref{BranchOfP-adicLogComingFromEllv} extends to $F_v^\times$.
Second, for each $v$ such that $\chi_v$ is ramified, let $W_v$ be a
subspace of $H_{\dR}^1(C_v/F_v)$ that is complementary to the space of
classes of holomorphic forms.
We remark that, when $C_v$ has semistable ordinary reduction in the sense
of \cite[\S1.1]{MT83}, there is a canonical choice of such a complementary space: the unit root subspace for the action of the Frobenius endomorphism.

We now describe the pairing $h = h_{\chi,\{W_v\}_v}$. For two elements $D_1$ and $D_2$ in $\Div^0(C)$, we have
\[h(D_1,D_2)= \sum_v h_v(D_1\otimes F_v,D_2\otimes F_v)\,,\]
where the sum is over  finite places of $F$. Here $h_v$ is the local
height pairing at $v$ defined below, where we assume that $D_1$ and $D_2$
have disjoint support.

\begin{rem}\label{R:common}
One can still define $h_v(D_1,D_2)$ for $D_1$ and $D_2$ with common support thanks to the work of Balakrishnan--Besser~\cite{balakrishnanbesser2015}. This involves choosing a tangent vector at each point in $\Supp(D_1)\cap\Supp(D_2)$.
Although the local terms depend on the tangent vectors, the global height
  pairing does not, provided that we make these choices consistently at all
  places. See \cite[Sections 2 and 3]{balakrishnanbesser2015} or \cite[\S2.2]{BKM23} for details. 
\end{rem}

When $\chi_v$ is unramified, the local term is described using arithmetic
intersection theory; more precisely, by the proof
of~\cite[Proposition~1.2]{CG89}, we have
\begin{equation}\label{hvunram}
h_v(D_1,D_2) = \chi_v(\pi_v)\cdot(\cD_1\cdot \cD_2),
\end{equation}
where $(\cD_1\cdot \cD_2)$ denotes the intersection multiplicity of certain
extensions $\cD_1$ and $\cD_2$ of $D_1$ and $D_2$, respectively, to a
proper regular model of $C$ over $\cO_v$. The pairing $h_v(D_1,D_2)$ is
continuous, bi-additive and symmetric. 

The local term at a place $v$ such that $\chi_v$ is ramified (which
implies that $v\mid p$) is given in terms of a Vologodsky integral $\lVint$ (see~\cite[Theorem~B]{vologodsky2003hodge}
and~\cite[Theorem~2.1]{Bes05}). We need some notation first. We say a
differential of the third kind, if it is regular except possibly for simple
poles with integer residues. The differentials of the third kind on $C_v$
defined over $F_v$ form a group $T(F_v)$.
The residue divisor homomorphism $\Res\colon T(F_v)\to \Div^0(C_v)$ is given by $\nu\mapsto \sum_P \Res_P \nu \cdot (P)$
where the sum is taken over closed points of $C_v$, and by \cite[Proposition~2.5]{CG89}, there is a canonical homomorphism $\Psi\colon T(F_v)\to H_{\dR}^1(C_v/F_v)$
which is the identity on holomorphic differentials. Then there exists a unique form $\omega_{D_1}$ in $T(F_v)$ satisfying $\Res(\omega_{D_1})=D_1$ and $\Psi(\omega_{D_1})\in W_v$,
and the local pairing is defined by
\[h_v(D_1,D_2) = t_v\left(\Vint_{D_2}\omega_{D_1}\right).\]
Here we use the branch $\log_v$ of the $p$-adic logarithm determined
by the extension of~\eqref{BranchOfP-adicLogComingFromEllv} to
$F_v^\times$. The pairing $h_v(D_1,D_2)$ is continuous and bi-additive, but not symmetric
in general. It is symmetric 
when the subspace $W_v$ is isotropic
with respect to the cup product pairing $\cup$ on $H_{\dR}^1(C_v/F_v)$;
see~\cite[Proposition~5.2]{CG89}.

We finish this section by noting that the global pairing $h$ on $C$ factors
through the Jacobian variety $J$, since by~\cite[Proposition~1.2,
Proposition~5.2]{CG89} we have $h_v(D, \div(f)) = \chi_v(f(D))$ 
for any $D\in \Div^0(C_v)$ and $f\in F_v(C_v)^\times$ without zeros or poles
along $D$. That is to say the pairing $h = h_{\chi,\{W_v\}_v}$ induces a bilinear pairing $h \colon J(F)\times J(F)\to \Q_p$.
This pairing is symmetric if all $W_v$ are isotropic with respect to the cup product.

\section{Computing Coleman--Gross heights}
\label{sec:Comp_CG_height}

We keep the notation introduced in \S\ref{sec:CG_height}. Additionally, for a differential form $\rho$ of the second kind on $C_v$, denote by $[\rho]$ its cohomology class in $H_{\dR}^1(C_v/F_v)$.

\subsection{Local and global symbols}

A crucial step in computing $h_v(D_1,D_2)$, for a place $v$ with $\chi_v$
ramified, is the construction of the form $\omega_{D_1}$. This requires the
explicit computation of the homomorphism $\Psi$, but its original
definition is not suitable for this. As in
\cite{balakrishnan2012computing}, we express this map in terms of
Besser's \textit{local} and \textit{global symbols} in order to make it more explicit. 

\begin{defn} 
\label{LocalGlobalSymbolsDef}
Let $\omega$ be a form of the third kind on $C_v$. Let $\rho$ be a form of
  the second kind on $C_v$ and fix a point $Z$ at which $\rho$ is regular.
  The \textbf{local symbol} at a point $A \in C_v$ is defined as 
  $$\langle\omega,\rho\rangle_A := -\Res_A\left(\omega\lVint\rho\right)\,,$$ 
where by $\lVint\rho$ we mean the formal integral of a local expansion of
  $\rho$ around $A$ whose constant term is the Vologodsky integral
  $\lVint_Z^A\rho$ (resp. $\lVint_Z^{A^\prime}\rho$ for a nearby point
  $A^\prime$) if $\rho$ is regular (resp. singular) at $A$\footnote{When
  $\Res_A(\omega) = 0$ (in particular, when $\omega$ is regular at $A$),
  the choice of constant of integration does not matter.}. Then the
  \textbf{global symbol} is defined as $$\langle\omega,\rho\rangle := \sum_A
  \langle\omega,\rho\rangle_A,$$
where $A$ runs over all points where either $\omega$ or $\rho$ has a singularity. 
\end{defn}

The following result is due to Besser and allows us to compute $\Psi$ by computing global symbols.

\begin{prop}
\emph{(\cite[Corollary~3.14]{Bes05})}
\label{Corollary314besser2005p}
Let $\omega$ and $\rho$ be as in Definition~\ref{LocalGlobalSymbolsDef}. Then the global symbol $\angles{\omega,\rho}$ is nothing but the cup product $\Psi(\omega)\cup[\rho]$.
\end{prop}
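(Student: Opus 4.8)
The plan is to realize both sides as the value at $\rho$ of a functional built from $\omega$ and to match these functionals by residue calculus. The first observation is that $\angles{\omega,\rho}$ depends on $\rho$ only through its de Rham class: replacing $\rho$ by $\rho+dk$ with $k\in F_v(C_v)$ changes each formal primitive $\lVint\rho$ by $k$ together with an additive constant, so $\angles{\omega,\rho}$ changes by $-\sum_A\Res_A(\omega k)-\const\cdot\sum_A\Res_A(\omega)$; the first sum vanishes by the residue theorem on the complete curve $C_v$ (it is the sum of the residues of the global meromorphic differential $\omega k$) and the second equals $\const\cdot\deg\Res(\omega)=0$. Hence $\angles{\,\cdot\,,\rho}$ factors through a bi-additive pairing $T(F_v)\times H^1_{\dR}(C_v/F_v)\to F_v$, and — the cup product being a perfect pairing on $H^1_{\dR}(C_v/F_v)$ — it suffices to show this pairing agrees with $(\omega,x)\mapsto\Psi(\omega)\cup x$. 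Both sides are homomorphisms in $\omega$, and $\Psi$ restricts to the identity on holomorphic forms by \cite[Proposition~2.5]{CG89}, so using $0\to H^0(C_v,\Omega^1)\to T(F_v)\xrightarrow{\Res}\Div^0(C_v)\to 0$ it is enough to check the identity for (a) holomorphic $\omega$ and (b), for each pair of closed points $P,Q$, one form $\omega$ of the third kind with $\Res(\omega)=(P)-(Q)$.

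In case (a) the only singularities of $\omega\,\lVint\rho$ lie at the poles of $\rho$, and the local-symbol expression for $\angles{\omega,\rho}$ is precisely the classical residue formula for the cup product $[\omega]\cup[\rho]=\Psi(\omega)\cup[\rho]$ of two differentials of the second kind, evaluated with the Vologodsky primitive of $\rho$ playing the role of local primitive; this is legitimate because that primitive genuinely differentiates to $\rho$ and, $\rho$ having no residues, the cup-product formula is insensitive to the constants of integration. Case (b) carries the content: once $\omega$ has nonzero residues the value of $\angles{\omega,\rho}$ really does depend on the constants in $\lVint\rho$, and the assertion is that the single base point $Z$ fixes them in exactly the way that reproduces $\Psi(\omega)\cup[\rho]$. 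When $\Res(\omega)$ is torsion in $\Pic^0(C_v)$, say $m\bigl((P)-(Q)\bigr)=\div(f)$, one may take $\omega=\tfrac1m\,df/f$; then $\Psi(\omega)=0$ (the map $\Psi$ annihilates logarithmic differentials) and the claim becomes the \emph{reciprocity} statement $\angles{df/f,\rho}=0$, which I would prove by separating, in $\sum_A\Res_A\bigl((df/f)\,\lVint\rho\bigr)$, the logarithmic poles of $df/f$ at the zeros and poles of $f$ — whose contribution assembles into the Vologodsky integral of $\rho$ over the principal divisor $\div(f)$, zero because Vologodsky integration is compatible with the group law on $\Jac(C_v)$ — from the remaining residues, which are those of a global meromorphic differential on $C_v$ and hence sum to zero.

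For a class $(P)-(Q)$ that is not torsion in $\Pic^0(C_v)$ one cannot use $df/f$, and this is the main obstacle. Here I would first replace $\rho$ by a cohomologous form of the second kind with a single pole at a point $Q_0\notin\{P,Q,Z\}$ (possible since such forms represent all of $H^1_{\dR}(C_v/F_v)$), which is harmless by the first reduction; then a local-symbol computation gives $\angles{\omega,\rho}=-\lVint_{(P)-(Q)}\rho-\Res_{Q_0}\bigl(\omega\,\lVint\rho\bigr)$. The term $-\lVint_{(P)-(Q)}\rho$ should match the part of $\Psi(\omega)\cup[\rho]$ governed by the image of $(P)-(Q)$ in $H^1_{\dR}(C_v/F_v)/H^0(C_v,\Omega^1)=H^1(C_v,\O_{C_v})$ — i.e.\ the characterization of $\Psi$ through its cup products with holomorphic forms, equivalently its compatibility with Vologodsky integration over divisors, going back to \cite{CG89} — while the residual term at $Q_0$ should be absorbed by the cup-product residue formula of case (a). Making this last matching precise — pinning down the cohomological class $\Psi(\omega)$ against these concrete residue terms — is the heart of the argument and is essentially a reciprocity law for Besser's local and global symbols. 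A perhaps cleaner alternative is to work on $U=C_v\setminus\bigl(\Supp(\Res\omega)\cup\{\text{poles of }\rho\}\bigr)$: there $\omega$ and $\rho$ are closed, $[\rho]$ lifts to compactly supported cohomology, and Poincaré duality for the open curve $U$ rewrites $\Psi(\omega)\cup[\rho]$ as a sum of local residue pairings against a primitive of $\rho$ — the Vologodsky primitive — recovering the global symbol; the work then migrates into identifying $\Psi(\omega)$ with the canonical logarithmic correction of the class of $\omega$ on $U$ that annihilates its residues.
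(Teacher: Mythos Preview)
The paper does not prove this proposition at all; it simply cites Besser \cite[Corollary~3.14]{Bes05} and moves on. So there is no ``paper's proof'' to compare your attempt to, and you have gone well beyond what the paper itself does.

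Your reductions are sound: the argument that $\angles{\omega,\rho}$ depends only on $[\rho]$ is correct, both sides are additive in $\omega$, and the split into holomorphic $\omega$ versus one chosen third-kind form per residue divisor $(P)-(Q)$ is legitimate. Case (a) is indeed the classical residue formula for the cup product. But you have not proved the proposition: in the non-torsion part of case (b) your text is explicitly a plan rather than an argument --- you write that the two pieces ``should match'' and that ``making this last matching precise\dots is the heart of the argument''. That is exactly the content of the result, and it is not supplied.

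Even the torsion case has a gap. You claim the residues at the poles $B_j$ of $\rho$ are ``those of a global meromorphic differential on $C_v$ and hence sum to zero''. But $(df/f)\!\lVint\rho$ is not a global meromorphic form: the primitive $\lVint\rho$ is only defined locally, with base-point-dependent constants. What is true is that at each $B_j$ the form $df/f$ is regular, so $\Res_{B_j}(df/f)=0$ and the constant in $\lVint\rho$ drops out; only the principal part of the primitive contributes. That observation is correct and useful, but it does not by itself make the sum $\sum_j\Res_{B_j}((df/f)\!\lVint\rho)$ a sum of residues of a single global meromorphic differential to which the residue theorem applies. One honest route is integration by parts against a Vologodsky primitive $\log f$ of $df/f$ on $C_v\setminus\Supp(\div f)$, obtaining $\Res_{B_j}((df/f)\!\lVint\rho)=-\Res_{B_j}((\log f)\rho)$, and then invoking a residue theorem for Coleman/Vologodsky functions --- but that last step is a nontrivial input you would need to state and justify, and it is in fact close to what Besser's machinery in \cite{Bes05} is set up to do. In short: the architecture is right, but the load-bearing step is still a citation, not a proof.
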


\subsection{Computing Coleman--Gross heights}

In this subsection, we present an algorithm for computing Coleman--Gross $p$-adic height pairings on hyperelliptic curves. 

From now on, we assume that $p>2$ and that our fixed curve $C$ is a genus-$g$ hyperelliptic curve over $F$ with affine model 
\[y^2 = b(x), \ \ \ b(x)\in \O_F[x] \text{ is monic of degree } d.\] 
Choose $(\chi,\{W_v\}_v)$ as in \S\ref{sec:CG_height}. 

\begin{rem}\label{R:comp_away}
In \cite[\S3.1]{BMS16}, it is already explained how to compute the local height pairing $h_v$ at places $v$ with $\chi_v$ unramified; see \cite{holmes2012computing,muller2014computing,van2020explicit} for a detailed account. 
\end{rem}

Therefore our main interest lies in the local height pairings at places $v$
of ramifications for $\chi$, where $p$-adic integration is used. Fix a
finite place $v$ of $F$ with the property that $\chi_v$ is ramified. Recall that this can only happen for primes $v$ above $p$ and that from $\chi_v$ we obtained a branch $\log_v$ of the $p$-adic logarithm and a linear map $t_v$. 

\smallskip

\begin{algorithm}[H]
\label{alg:CompCG}
\caption{\bf Computing local heights at ramified places}
\KwIn{
\begin{itemize}
  \item A finite place $v\mid p$ of $F$ such that $\chi_v$ is ramified.
    \item A complementary subspace $W_v$, a branch $\log_v$ of the
      logarithm and a linear map $t_v$.
    \item Divisors $D_1,D_2\in \Div^0(C_v)$ with disjoint support.
\end{itemize}
}
\KwOut{The local height pairing $h_v(D_1,D_2)$.}

\begin{enumerate}
    \item Choose a differential $\omega$ on $C_v$ of the third kind defined over $F_v$ with residue divisor $D_1$ (see \S\ref{formofthethirdkindwithgivenresiduedivisor}).
    \item Determine the holomorphic form $\eta$ such that $\Psi(\omega - \eta)$ lies in the complementary subspace $W_v$ (see \S\ref{fromDtoomegaD}); then $\omega_{D_1} = \omega - \eta$.
    \item Compute the Vologodsky integral $\lVint_{D_2} \omega_{D_1}$ as described in \cite{KatzKaya,Kaya} with respect to $\log_v$ (see \S\ref{s:ComputingVol}).
    \item Return $t_v\left(\lVint_{D_2}\omega_{D_1}\right)$.
\end{enumerate} 
\end{algorithm}

\smallskip

Below, $\infty^+$ and $\infty^-$ stand for $(1:1:0)$ and $(1:-1:0)$ when $d$ is even, and $\omega_i$ denotes the $1$-form $x^i\frac{dx}{2y}$.

\subsubsection{Constructing a form with given residue divisor} 
\label{formofthethirdkindwithgivenresiduedivisor}

Let $D$ be a divisor of degree $0$ on $C_v$ over $F_v$ and write $D = \sum_j \left((P_j) - (Q_j)\right)$
with points $P_j,Q_j$ on $C_v$. Then $\omega = \sum_j \nu_j$ is a form of the third kind such that $\Res(\omega) = D$, where $\nu_j$ is defined by
\[\nu_j := 
\begin{dcases}
\left(\dfrac{y+y(P_j)}{x-x(P_j)}-\dfrac{y+y(Q_j)}{x-x(Q_j)}\right)\dfrac{dx}{2y} &\text{if } P_j \text{ and } Q_j \text{ are finite}; \\
\omit\hfil$\dfrac{y+y(P_j)}{x-x(P_j)}\dfrac{dx}{2y}$\hfil &\text{if } d
  \text{ is odd},\  P_j \text{ is finite},\ Q_j =\infty;\\
\omit\hfil$2\omega_g$\hfil &\text{if } d \text{ is even},\  P_j=\infty^{-},\ Q_j=\infty^{+}; \\
\omit\hfil$\dfrac{y+y(P_j)}{x-x(P_j)}\dfrac{dx}{2y}-\omega_g$\hfil &\text{if } d \text{ is even},\  P_j \text{ is finite},\ Q_j=\infty^{-}; \\
\omit\hfil$\dfrac{y+y(P_j)}{x-x(P_j)}\dfrac{dx}{2y}+\omega_g$\hfil &\text{if } d \text{ is even},\  P_j \text{ is finite},\ Q_j=\infty^{+}.
\end{dcases}\]
Note that the points $P_j,Q_j$, and hence the form $\nu_j$, are not
necessarily defined over $F_v$, but since $D$ is defined over $F_v$, so is
$\omega$. 

\subsubsection{Computing $\Psi$}
\label{computingPsi}

For $j=0,1,\dots,2g-1$, define
\[\rho_j := 
\begin{dcases}
\omit\hfil$\omega_j$\hfil &\text{if } j=0,\dots,g-1; \\
\omit\hfil$\omega_j$\hfil &\text{if } j=g,\dots,2g-1\ \text{and } d \text{ is odd};\\
\omega_{j+1} + 2\Res_{\infty^{+}}(\omega_{j+1})\omega_g &\text{if } j=g,\dots,2g-1\ \text{and } d \text{ is even}.
\end{dcases}\]
By construction, $\rho_0,\ldots,\rho_{g-1}$ span the space of holomorphic
differentials and each $\rho_j$ is of the second kind,
so that the class $[\rho_j]$ is an element of $H_{\dR}^1(C_v/F_v)$.
Moreover, the elements $[\rho_0],\dots,[\rho_{2g-1}]\in H_{\dR}^1(C_v/F_v)$
are linearly independent, which implies that the set $\B =
\{[\rho_0],\dots,[\rho_{2g-1}]\}$ forms a basis for $H_{\dR}^1(C_v/F_v)$.

Now let $\omega$ be a form of the third kind defined over $F_v$. Then
$\Psi(\omega) = \sum_{i = 0}^{2g-1} c_i[\rho_i]$
for some constants $c_i$. By Proposition~\ref{Corollary314besser2005p},
\begin{equation}
\label{Psi'sCoefficients}
\angles{\omega,\rho_j} = \Psi(\omega)\cup[\rho_j] = \sum_{i=0}^{2g-1}c_i([\rho_i]\cup[\rho_j]),\ \ \ j=0,1,\dots,2g-1.
\end{equation}
Let $N$ denote the cup product matrix with respect to the basis $\B$. From \eqref{Psi'sCoefficients}, we get
\[\begin{pmatrix}
c_0 \\
\vdots \\
c_{2g-1}
\end{pmatrix} = -N^{-1}\begin{pmatrix}
\angles{\omega,\rho_0} \\
\vdots \\
\angles{\omega,\rho_{2g-1}}
\end{pmatrix}.\]
Therefore, in order to compute $\Psi(\omega)$, it is enough the compute the matrix $N$ and the global symbols $\angles{\omega,\rho_j}$. The former is an easy task. The latter can be done using the techniques in \cite{KatzKaya,Kaya}; the situation is even better if the residue divisor of $\omega$ contains only affine points:

\begin{prop}
If the residue divisor $D$ of $\omega $ does not contain the point(s) at infinity, we have
\[\angles{\omega,\rho_j} = 
\begin{dcases}
\omit\hfil$-\lVint_D \rho_j - \Res_{\infty}\left(\omega\lVint\rho_j\right)$\hfil &\text{if } d \text{ is odd}, \\
\omit\hfil$-\lVint_D \rho_j - \Res_{\infty^{+}}\left(\omega\lVint\rho_j\right) - \Res_{\infty^{-}}\left(\omega\lVint\rho_j\right)$\hfil  &\text{if } d \text{ is even}.
\end{dcases}\]
\end{prop}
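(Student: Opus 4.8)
The plan is to unwind the definition of the global symbol $\angles{\omega,\rho_j}$ from Definition~\ref{LocalGlobalSymbolsDef} and isolate the contribution of the affine points from that of the point(s) at infinity. Recall $\angles{\omega,\rho_j} = \sum_A \angles{\omega,\rho_j}_A$, where $A$ runs over the (finitely many) points at which $\omega$ or $\rho_j$ has a singularity, and $\angles{\omega,\rho_j}_A = -\Res_A(\omega\lVint\rho_j)$. Since $\rho_j$ is of the second kind its only singularities are at the point(s) at infinity (for $d$ odd, the single point $\infty$; for $d$ even, $\infty^+$ and $\infty^-$), and by hypothesis $D = \Res(\omega)$ consists only of affine points, so $\omega$ is regular at infinity. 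Hence the sum over $A$ splits as a sum over $\Supp(D)$ plus the residue term(s) at infinity: for $d$ odd this is $-\Res_\infty(\omega\lVint\rho_j)$, and for $d$ even it is $-\Res_{\infty^+}(\omega\lVint\rho_j) - \Res_{\infty^-}(\omega\lVint\rho_j)$. These are exactly the infinity terms appearing in the claimed formula, so it remains to identify the affine part $\sum_{A \in \Supp(D)} \angles{\omega,\rho_j}_A$ with $-\lVint_D\rho_j$.

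For the affine part, fix $A \in \Supp(D)$. There $\rho_j$ is regular, so by the footnote in Definition~\ref{LocalGlobalSymbolsDef} the choice of constant of integration in $\lVint\rho_j$ is irrelevant, and we may take $\lVint\rho_j$ to be the local expansion around $A$ whose constant term is the Vologodsky integral $\lVint_Z^A\rho_j$ for the fixed base point $Z$ (at which $\rho_j$ is regular). Now $\omega$ has at worst a simple pole at $A$ with integer residue $n_A := \Res_A\omega$ (the multiplicity of $A$ in $D$), while $\lVint\rho_j$ is regular at $A$ with value $\lVint_Z^A\rho_j$; therefore $\Res_A(\omega\lVint\rho_j) = n_A \cdot \lVint_Z^A\rho_j$. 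Summing over $A \in \Supp(D)$ gives $\sum_A \angles{\omega,\rho_j}_A = -\sum_A n_A\,\lVint_Z^A\rho_j = -\lVint_D\rho_j$, by additivity of the Vologodsky integral in the divisor and the fact that $D = \sum_A n_A (A)$ has degree $0$ (so the base point $Z$ drops out of $\lVint_D\rho_j := \sum_A n_A \lVint_Z^A\rho_j$). Combining with the previous paragraph yields the stated formula in both parity cases.

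I expect the only subtle point to be the residue computation $\Res_A(\omega\lVint\rho_j) = n_A\,\lVint_Z^A\rho_j$ at an affine point: one must make sure that when $\omega$ has a simple pole at $A$, multiplying by the locally analytic (regular) function $\lVint\rho_j$ picks out precisely the product of the residue of $\omega$ with the constant term of that function's expansion, and that $\rho_j$ being of the second kind indeed forces regularity at all affine points. Everything else is bookkeeping: matching the set of singular points $A$ over which the global symbol ranges with $\Supp(D)$ together with the point(s) at infinity, and invoking the well-definedness of $\lVint_D$ on degree-$0$ divisors. The even-degree case requires no new idea beyond keeping track of the two points $\infty^\pm$ separately.
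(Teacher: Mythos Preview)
Your proof is correct and follows the same approach as the paper's: split the global symbol into the contributions from $\Supp(D)$ and from infinity, and use that $\omega$ has only a simple pole at each $A\in\Supp(D)$ to evaluate the affine local symbol as $-n_A\,\lVint_Z^A\rho_j$. One small wording slip: at $A\in\Supp(D)$ the footnote does \emph{not} apply (there $\Res_A(\omega)=n_A\neq 0$, so the constant of integration does matter); rather, the constant term $\lVint_Z^A\rho_j$ is forced by the definition itself because $\rho_j$ is regular at $A$---but your subsequent computation uses exactly this, so the argument is unaffected.
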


\begin{proof}
This is a straightforward generalisation of \cite[Proposition~5.12]{balakrishnan2012computing} (or rather, its corrected version in the errata \cite{balakrishnan2012computingErrata}). The key observation is that the local symbol at a point $A$ in the support of $D$ equals
\[-(\text{the multiplicity of } D \text{ at } A) \cdot \Vint_Z^A\rho_j\]
since $\omega$ has a simple pole at $A$. Here $Z$ is a fixed point throughout the global symbol computation. Therefore, summing over all points gives $-\lVint_D \rho_j$.
\end{proof}

\subsubsection{From $D$ to $\omega_D$} 
\label{fromDtoomegaD}

Let $\omega$ be a form of the third kind defined over $F_v$ with residue
divisor $D$. Let $\eta_0,\dots,\eta_{g-1}$ be differentials of the second
kind whose classes generate $W_v$. Since $W_v$ is a complementary to the
space of classes of holomorphic forms, there are constants $d_i,e_i$ such that 
\begin{equation}
\label{constantsdiei}
\Psi(\omega) = d_0[\rho_0] + \dots + d_{g-1}[\rho_{g-1}] + e_0[\eta_0] + \dots + e_{g-1}[\eta_{g-1}]. 
\end{equation}
Set
\[\eta = d_0\rho_0 + \dots + d_{g-1}\rho_{g-1}\in H^0(C_v,\Omega_{C_v/F_v}^1).\]
Then $\Psi(\omega - \eta)$ lies in $W_v$.

Let us explain how to determine the constants $d_i$. First, notice that $\eta_j/\omega_0$ can be represented by a polynomial in $x$. Without loss of generality, we may assume that
\begin{equation}
\label{DegreeAssumption}
\deg\left(\frac{\eta_j}{\omega_0}\right) = 
\begin{dcases}
\omit\hfil$j+g$\hfil &\text{if } d \text{ is odd}, \\
\omit\hfil$j+g+1$\hfil  &\text{if } d \text{ is even}.
\end{dcases} 
\end{equation}
As explained in \S\ref{computingPsi}, we can explicitly compute the constants $c_i$ such that
\begin{equation}
\label{constantsci}
\Psi(\omega) = c_0[\rho_0]+\dots+c_{2g-1}[\rho_{2g-1}]. 
\end{equation}
Finally, the assumption \eqref{DegreeAssumption} allows us to determine the constants $d_i$ easily by comparing \eqref{constantsdiei} and \eqref{constantsci}.

\subsubsection{Computing $\lVint_{D_2} \omega_{D_1}$}
\label{s:ComputingVol} 

The methods described in \cite{KatzKaya,Kaya} reduce the computation of
Vologodsky integrals on hyperelliptic curves to the computation of Coleman
integrals on hyperelliptic curves of good reduction. The latter is feasible
thanks to the algorithms developed in
\cite{BBKExplicit,balakrishnan2012computing,balakrishnan2015coleman}. In
particular, one uses \cite[Algorithm~4.8]{balakrishnan2012computing} to
deal with differential forms of the third kind; however, this requires the
good reduction hyperelliptic curve in question to have odd degree. 
Recent work of Gajovi\'c \cite{GM23} with the third-named author 
introduced a similar, but simpler 
algorithm, based on~\cite{balakrishnan2015coleman}, that also works for even degree. It is restricted to \textit{antisymmetric} differentials but is significantly faster; we make use of this algorithm when possible.

\begin{rem}\label{R:common_compute}
  The computation of Coleman--Gross heights between divisors with common
  support is somewhat more difficult. 
  For instance, suppose that $C$ is an odd degree hyperelliptic curve. For
  an affine point, fix the 
  tangent vector dual to $\omega_0$ and fix the tangent vector dual to
  $\omega_{g-1}$ for $\infty\in C(F)$. 
  According to~\cite[Theorem~2.2]{balakrishnan2016quadratic}, 
  the local height function
  $P\mapsto h_v(P-\infty, P-\infty)$ with respect to this choice is a
  double Vologodsky integral from a tangential base point at $\infty$ to
  $P$. 
  The algorithms for the computation of these local heights discussed
  in~\cite[\S4]{balakrishnan2016quadratic}
  and~\cite[\S3.3]{balakrishnan2017computing} are crucial to carry out the
  quadratic Chabauty method for integral points on odd degree hyperelliptic
  curves introduced for good reduction in~\cite{balakrishnan2016quadratic}.
  No algorithms for the computation of double Vologodsky integrals have
  been developed yet, but it should be possible to develop such an algorithm by combining the techniques of \cite{KatzKaya,Kaya} with the work of Katz and Litt \cite{KL22}.
  In the next section, we discuss an alternative
  approach to the computation of these local heights via $p$-adic N\'eron
  functions when $g=2$. 
\end{rem}

\section{$p$-adic N\'eron functions and the canonical subspace in genus~2}
\label{sec:4}

In this section we briefly discuss the relation between Coleman--Gross
heights and the construction of Mazur--Tate, referring to~\cite{BKM23}
and~\cite{Bia23} for
details. This
yields a different approach to the computation of the Coleman--Gross
height when the genus is~2 and $C(F_v)$ contains a Weierstrass point for
all $v$
of ramification of $\chi$. One advantage of this approach is that it makes
it possible to compute local height pairings between certain 
divisors with common support which are crucial for the quadratic Chabauty method for
integral points in~\cite{balakrishnan2016quadratic}; see Remark~\ref{R:common_compute}. If, in addition, the reduction of $C$ at such $v$ is semistable ordinary in the
sense of~\cite[Definition~5.1]{BKM23}, then this comparison also makes it
possible to isolate a canonical subspace $W^C_v$.

%%%%%%%%%%%%%%%%%%%%%%%%%%%%%%%%%%%%%%%%%%%%%%%%%%%%%%%%%%%%%%%%%%%%%%%%
\subsection{Mazur--Tate heights, $p$-adic N\'eron functions and
Coleman--Gross heights}\label{subsec:}
Mazur and Tate~\cite{MT83} construct a
 global $p$-adic height pairing on an abelian variety, also depending
on the choice of a
continuous id\`ele class character $\chi = (\chi_v)_{v}\colon
\mathbb{A}_F^\times/F^\times \to \Qp$.
The
global pairing is a sum of local pairings, so called $\chi_v$-splittings
(see~\cite[\S3.1]{BKM23}).
In~\cite[\S3]{BKM23}, we introduced a notion of $p$-adic
N\'eron functions and showed that they induce $\chi_v$-splitting. 
When $v\mid p$ and $\chi_v$ is
ramified, the $p$-adic N\'eron function depends on the
choice of a splitting of the Hodge
filtration of $H_{\dR}^1$. The global Mazur--Tate height is then the sum of
the $p$-adic N\'eron functions by~\cite[Proposition~3.19]{BKM23}.

Suppose from now on that the abelian variety is the
Jacobian $J$ of a curve $C/F$. In this case the $p$-adic N\'eron
function, and hence the corresponding Mazur--Tate height, depends
on the choice of a complementary subspace $W_v$ as above.
More precisely, for a divisor $D\in \Div(J)$ and a finite place $v$ of $F$, the
$p$-adic N\'eron function $\lambda_{D,v}\colon J(F_v)\setminus\Supp(D)\to
\Q_p$ is, up to a constant, the same as the classical real-valued N\'eron function if $\chi_v$
is unramified; see~\cite[\S3.1.1]{BKM23}. In the ramified case, we use
Besser's $p$-adic log functions~\cite{Bes05, BMS} on the line bundle
$\O(D)$ to define $\lambda_{D,v}$ in~\cite[\S3.1.2]{BKM23}.

From now on, suppose for simplicity that 
$C/F$ is an odd degree hyperelliptic curve~\footnote{The local results
stated below for a place $v$ continue to hold if we only assume that
$C(F_v)$ contains a Weierstrass point.}.
Let $\Theta\in \Div(J)$ be the theta divisor corresponding to the
Abel--Jacobi embedding $\iota\colon C\to J$ with respect to $\infty\in C(F)$ and, for each $v$,
let $\lambda_v\colonequals \lambda_{2\Theta,v}$. Then we showed
in~\cite[Proposition~3.19]{BKM23} that the global Mazur--Tate
height function (with respect to $\chi$ and $W_v$, for ramified $\chi_v$)
is simply the sum of $\lambda_v$ for all $v$ (with the same choices). 
Moreover, the local Coleman--Gross height pairing between two divisors on $C$ with
disjoint support can be expressed in terms of pullbacks of $\lambda_v$.
This can be deduced from~\cite[Remark~6.10,
Corollary~6.12]{BMS}. 

We now assume that $p\geq 5$ and  that $C/F$ has genus 2, given by an affine equation
  \begin{equation}\label{Ceqn}
C\colon y^2 = b(x) =  x^5 + b_1x^4+b_2x^3+b_3x^2+b_4x+b_5,\ \ \ b_1,\ldots,b_5\in
\O_F\,.  
  \end{equation}
  In this case, we can describe the $p$-adic N\'eron function $\lambda_v$
  with respect to $\chi$ and $W_v$ explicitly by work of
  Blakestad~\cite{blakestadsthesis} and the first-named author~\cite{Bia23} as we
  now recall, referring to~\cite[Sections~3 and 4]{Bia23} for details. 
   We identify $H_{\dR}^1(C_v/F_v)$  and $H_{\dR}^1(J_v/F_v)$ via the
   isomorphism induced by $\iota$.
We assume for the rest of this section that $v\mid p$ and that $\chi_v$ is ramified.
  First,~\cite[Proposition~3.5]{Bia23} shows that the isotropic 
  complementary subspaces $W_v$ are in bijection with the set of symmetric matrices
  $c=(c_{ij})\in F_v^{2\times 2}$ via 
  \begin{equation}\label{cWv}
    c\mapsto \langle \eta_1^{(c)}, \eta_2^{(c)}\rangle,
  \end{equation}
  where 
\begin{equation}\label{eq:etais}
\begin{aligned}
\eta_1^{(c)} = (-3x^3 - 2b_1x^2 -b_2x + c_{12} x +
  c_{11})\frac{dx}{2y},\quad
 \eta_2^{(c)}= (-x^2 + c_{22} x + c_{12})\frac{dx}{2y}.
 \end{aligned}
\end{equation}
The $v$-adic $\sigma$-function $\sigma^{(c)}_v$ is defined
in~\cite[\S3.2]{Bia23} as the unique odd
power series of the form $T_1(1+\O(T_1,T_2))$ in $F_v[[T_1,T_2]]$ that
solves
\begin{equation*}
D_iD_j(\log(\sigma_v^{(c)}(T))) =-X_{ij}(T) + c_{ij}, \qquad \text{for all
  } 1\leq i,j\leq 2\,,
\end{equation*}
where $D_i$ is the invariant derivation dual to $\omega_i$ and and the
$X_{ij}$ are certain functions on the symmetric square of $C$ by Grant~\cite[\S4.1,
\S4.2]{Grant1990}. Via Grant's
explicit formal group law $T=(T_1,T_2)$ (see~\cite{Grant1990}
and~\cite[Section~2]{Bia23}),
$\sigma_v^{(c)}$ induces a function on a subgroup $H_v\subset J(F_v)$ of
finite index (see~\cite[Proposition~3.4]{Bia23}); more precisely, $H_v$ is
a subgroup of the model-dependent kernel of reduction. 
By~\cite[Corollary~4.6]{BKM23}, the $p$-adic N\'eron function with respect
to the subspace $W_v$ is then given by
\begin{equation}\label{lambdavdef}
\lambda_v(P) = -\frac{2}{ m^2}\cdot
  \chi_{v}\left(\frac{\sigma^{c(W_v)}_v(T(mP))}{\phi_m(P)}\right)\,,\quad mP\in H_v
  \,,m>0
\end{equation}
for any $P\in
  J(F_v)\setminus( \Supp(\Theta)\cup J(F_v)_{\mathrm{tors}})$; 
here $\phi_m$ is the $m$-th division polynomial
constructed by Kanayama~\cite{kan05} and~\cite{uch11}.
Using the symmetric $p$-adic Green functions due to
Colmez~\cite{colmez1998integration}, which by~\cite[Theorem~5.30]{Bia23}
are  essentially
$p$-adic N\'eron functions, one obtains
the following formula: 
\begin{prop}
\emph{(\cite[Corollary~4.7]{BKM23})}\label{prop:comp_disjoint}
  For distinct $P_1,P_2,Q_1,Q_2\in C(F_v)\setminus \{\infty\}$,
we have 
\begin{equation*}
  h_v^{\CG}(P_1 - P_2,Q_1 - Q_2) = -\frac{1}{2}\sum_{1\leq i,j\leq 2}
  (-1)^{i+j}\lambda_{v}([Q_i - P_j])\,,
\end{equation*}
with respect to the same choices $\chi_v$ and $W_v$ on both sides.
\end{prop}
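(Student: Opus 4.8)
The plan is to reduce the claimed formula to a pointwise identity between two Green functions and then to invoke the comparison results of Colmez and of~\cite{Bia23}. Recall first that the N\'eron function $\lambda_v=\lambda_{2\Theta,v}$ is defined by~\eqref{lambdavdef} only when $W_v$ is isotropic --- by~\cite[Proposition~3.5]{Bia23} these are precisely the $W_v$ corresponding to symmetric matrices $c(W_v)$ --- so $h_v^{\CG}$ is symmetric throughout. Using bi-additivity of $h_v^{\CG}$ on degree-$0$ divisors with disjoint support, expand
\[
h_v^{\CG}((P_1)-(P_2),(Q_1)-(Q_2)) = \sum_{1\le i,j\le 2}(-1)^{i+j}\,g_v(P_i,Q_j),
\]
where $g_v$ is Colmez's symmetric $p$-adic Green function~\cite{colmez1998integration} attached to the data $(\chi_v,W_v)$; here one uses that the Coleman--Gross local height coincides with Colmez's (for isotropic $W_v$) and that the latter is recovered from $g_v$ via exactly this four-term formula, the dependence on any auxiliary base points cancelling in the alternating sum. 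It therefore suffices to prove the single identity
\[
g_v(P,Q) = -\tfrac12\,\lambda_v([P-Q]) \qquad\text{for distinct } P,Q\in C(F_v)\setminus\{\infty\}.
\]

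For this I would appeal to~\cite[Theorem~5.30]{Bia23}, which expresses Colmez's symmetric Green function $g_v$ in terms of the $v$-adic $\sigma$-function $\sigma_v^{(c(W_v))}$, and hence --- through the formula~\eqref{lambdavdef} of~\cite[Corollary~4.6]{BKM23} --- in terms of $\lambda_v$. Via the Abel--Jacobi embedding one has $[P-Q]=\iota(P)-\iota(Q)$, and $[P-Q]\in\Supp(2\Theta)=\Supp(\Theta)$ if and only if $P=Q$, $P=\infty$ or $Q=\infty$ (by Riemann--Roch on $C$: the equivalence $P+\infty\sim Q+R$ admits a solution $R\in C$ only in these cases); thus the hypotheses on $P,Q$ are precisely what makes both sides finite. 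Reading off the normalisation built into~\eqref{lambdavdef} --- in particular the factor $-2/m^2$ and the $\sigma^{2}$-type behaviour, which reflect the divisor $2\Theta$ rather than $\Theta$ --- produces the constant $-\tfrac12$.

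Finally I would substitute the pointwise identity back into the four-term formula and reconcile the index pattern. Since $\Theta$ is the image of $C$ under the Abel--Jacobi map based at the Weierstrass point $\infty$, it is symmetric, so $\lambda_v$ is even and $\lambda_v([P-Q])=\lambda_v([Q-P])$; relabelling the summation indices $i\leftrightarrow j$ then turns $\sum_{i,j}(-1)^{i+j}g_v(P_i,Q_j)$ into $-\tfrac12\sum_{i,j}(-1)^{i+j}\lambda_v([Q_i-P_j])$, which is the claimed expression. (Alternatively, the $\lambda_v$-pullback formula for $h_v^{\CG}$ can be obtained directly from~\cite[Remark~6.10, Corollary~6.12]{BMS}, bypassing Colmez's Green functions altogether.)

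I expect the main obstacle to be the constant- and sign-bookkeeping in the middle step: three normalisations are simultaneously in play --- the Coleman--Gross pairing, Colmez's Green function / the $v$-adic $\sigma$-function, and the N\'eron function $\lambda_{2\Theta,v}$ --- and one must chase them through~\cite[Theorem~5.30]{Bia23} and~\cite[Corollary~4.6]{BKM23} to land on exactly $-\tfrac12$ rather than, say, $-1$ or $+\tfrac12$. One should also verify that the restrictions in~\eqref{lambdavdef} to non-torsion $P$ with $mP$ in the (model-dependent) kernel of reduction do not actually constrain the statement: the content rests on the identity of Green functions, of which the $\sigma$-function formula is only one manifestation, so it persists on the whole locus where both sides are defined.
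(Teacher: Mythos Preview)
Your proposal is correct and follows exactly the line the paper indicates: the proposition is only cited here (from~\cite[Corollary~4.7]{BKM23}), but the sentence preceding it sketches precisely your argument --- pass through Colmez's symmetric $p$-adic Green functions and use~\cite[Theorem~5.30]{Bia23} to identify them with $p$-adic N\'eron functions. Your alternative route via~\cite[Remark~6.10, Corollary~6.12]{BMS} is also the one the paper alludes to a few lines earlier.
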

By~\cite[Corollary~5.32]{Bia23}, this extends to a formula for
Coleman--Gross height pairings between general divisors with disjoint support.
The main result of~\cite{BKM23} extends this as follows.
Define the Coleman--Gross local height function 
 $$\lambda_v^{\CG}\colon J(F_v)\setminus\Supp(\Theta)\to \Q_p\,;\quad
[Q_1-Q_2]
\mapsto h_v(Q_1-Q_2, Q_1-Q_2)$$ (see~\cite[Definition~4.1]{BKM23}), where 
we extend the Coleman--Gross height pairing by choosing the tangent
vector dual to $\omega_0$ for the affine points $Q_1,Q_2$
(see Remarks~\ref{R:common} and~\ref{R:common_compute}). 
\begin{thm}\emph{(\cite[Theorem~1.1]{BKM23})}\label{T:bkm}
  We have $\lambda_v=\lambda_v^{\CG}$ 
with respect to the same choices $\chi_v$ and $W_v$ on both sides.
\end{thm}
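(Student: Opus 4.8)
The plan is to recognize $\lambda_v^{\CG}$ as the $p$-adic N\'eron function attached to the divisor $2\Theta$ with respect to $\chi_v$, $W_v$ and the fixed branch $\log_v$, i.e.\ as Besser's $p$-adic log function on the line bundle $\O(2\Theta)$ in the sense of~\cite[\S3]{BKM23}, \cite{Bes05, BMS}, and then to conclude by the characterization of such functions. Heuristically the identity is what one gets by formally substituting $P_1=Q_1$, $P_2=Q_2$ in Proposition~\ref{prop:comp_disjoint}: the right-hand side becomes $-\tfrac{1}{2}\bigl(\lambda_v(0) - \lambda_v([Q_1-Q_2]) - \lambda_v([Q_2-Q_1]) + \lambda_v(0)\bigr)$, which equals $\lambda_v([Q_1-Q_2])$ once one uses that $\lambda_v$ is even and discards the (divergent) contributions $\lambda_v(0)$ in the correct way; the claimed identity amounts to the assertion that the Balakrishnan--Besser regularization of $h_v(Q_1-Q_2,Q_1-Q_2)$ via the tangent vector dual to $\omega_0$ (Remarks~\ref{R:common} and~\ref{R:common_compute}) effects exactly this cancellation.

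To make this rigorous I would first establish the local shape of $\lambda_v^{\CG}$ away from $\Supp(\Theta)$. For divisors with \emph{disjoint} support the ramified local Coleman--Gross term is $t_v$ of a Vologodsky integral of a form $\omega_{D_1}$ with $\Psi(\omega_{D_1})\in W_v$; by the disjoint-support comparison already available---Proposition~\ref{prop:comp_disjoint}, and in full generality~\cite[Corollary~5.32]{Bia23}, ultimately resting on~\cite[Remark~6.10, Corollary~6.12]{BMS}---this coincides locally with Besser's log function on $\O(2\Theta)$ relative to $W_v$. The additional input is the diagonal: by~\cite{balakrishnanbesser2015}, $h_v(Q_1-Q_2,Q_1-Q_2)$ is the unique extension of the bilinear pairing to equal-support divisors compatible with the chosen tangent vectors, and one must check that this extension reproduces the value of $\lambda_v$. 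In the genus-$2$ case of~\eqref{lambdavdef} this comes down to verifying that Grant's formal parameter $T_1$---in which $\sigma_v^{(c)}$ is normalized as the odd series $T_1(1+\O(T_1,T_2))$---is dual to $\omega_0$, so that the tangent-vector normalization on the Coleman--Gross side matches the $\sigma$-function normalization on the Mazur--Tate side; the division polynomials $\phi_m$ in~\eqref{lambdavdef} merely spread $\sigma_v^{(c)}$ from the finite-index subgroup $H_v$ onto all of $J(F_v)$ and play no role in the local comparison.

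Next I would check the behaviour along $\Supp(\Theta)$. Since $D\mapsto h_v(D,D)$ is quadratic, as $P=[Q_1-Q_2]$ approaches $\Theta$ the function $\lambda_v^{\CG}$ acquires a logarithmic singularity of order $2$: $\lambda_v^{\CG}(P)+\chi_v(s)$ extends to a Vologodsky-analytic function near $\Theta$, where $s$ is a local equation of the divisor $2\Theta$ rather than of $\Theta$; singularities along $-\Theta$ are handled using $h_v(-D,-D)=h_v(D,D)$ together with the symmetry of $2\Theta$. This is exactly the singularity structure demanded of $\lambda_{2\Theta,v}$, so, combined with the previous step, $\lambda_v^{\CG}$ is a $p$-adic N\'eron function for $2\Theta$ with respect to $\chi_v$ and $W_v$. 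Hence $\lambda_v^{\CG}$ and $\lambda_v$ differ by a constant $\gamma\in\Q_p$, which I would pin down to $0$ by comparing the two sides at a convenient point---e.g.\ a $2$-torsion point, where $\lambda_v$ is computable from~\eqref{lambdavdef} and $\lambda_v^{\CG}$ from a direct Coleman--Gross computation---or by invoking the compatibility of both normalizations with the global Mazur--Tate height via~\cite[Proposition~3.19]{BKM23}. (An alternative route for the whole argument is to pass through Colmez's symmetric $p$-adic Green functions, which are essentially $p$-adic N\'eron functions by~\cite[Theorem~5.30]{Bia23} and can be matched with Coleman--Gross heights on the diagonal by a Colmez-type computation.)

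I expect the main obstacle to be the diagonal analysis in the second and third steps: reconciling the Balakrishnan--Besser tangent-vector convention on the Coleman--Gross side with the formal-group and $\sigma$-function normalizations of~\cite{Grant1990, Bia23} on the Mazur--Tate side, and confirming that the order-$2$ logarithmic singularity of $h_v(D,D)$ along $\Theta$ corresponds to a local equation of $2\Theta$ rather than to one of $\Theta$ plus a correction that is not globally controlled. Everything else---the off-$\Theta$ shape, bilinearity, and the vanishing of the constant---reduces to Proposition~\ref{prop:comp_disjoint} together with~\cite[Corollary~5.32]{Bia23} and~\cite[Remark~6.10, Corollary~6.12]{BMS}.
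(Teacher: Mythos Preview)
The paper does not prove this theorem: it is stated as a citation of \cite[Theorem~1.1]{BKM23}, with no argument given here beyond the surrounding discussion. So there is no ``paper's own proof'' to compare your proposal against.

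That said, your sketch is in the right spirit and aligns with the architecture the paper hints at. The paper's discussion just before Proposition~\ref{prop:comp_disjoint} indicates that the comparison goes through Colmez's symmetric $p$-adic Green functions (via \cite[Theorem~5.30]{Bia23}) and Besser's log functions (via \cite[Remark~6.10, Corollary~6.12]{BMS}); you mention this as an ``alternative route,'' but it appears to be closer to the actual route taken in \cite{BKM23}. Your main line of attack---show $\lambda_v^{\CG}$ has the analytic shape of a $p$-adic N\'eron function for $2\Theta$, then use a characterization to conclude equality up to a constant, then pin down the constant---is reasonable, and your identification of the diagonal normalization (matching the Balakrishnan--Besser tangent vector dual to $\omega_0$ with the $\sigma$-function normalization $\sigma_v^{(c)}=T_1(1+O(T_1,T_2))$) as the crux is accurate. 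The one place where your outline is vague is the determination of the constant: appealing to ``compatibility with the global Mazur--Tate height'' is circular unless you already know the local comparison at all other places, and evaluating at a $2$-torsion point requires care since such points lie on $\Theta$. A cleaner way, and likely what \cite{BKM23} does, is to track the normalization through the Green-function/log-function formalism directly rather than fixing it a posteriori.
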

\begin{rem}\label{R:ell}
  The genus~1 analogue of Theorem~\ref{T:bkm}  is due to
  Balakrishnan and Besser~\cite[\S4]{balakrishnanbesser2015}. Their proof
  is given for good ordinary
  reduction, but it extends readily to the general case.
  It is a classical result going back to 
  Bernardi~\cite{Ber81} and Mazur--Tate~\cite{MT83} that global $p$-adic heights
  on elliptic curves can be expressed in terms of $p$-adic sigma functions.
See for instance~\cite{bianchi2019quadratic} and the
references therein for the relation of the latter to $p$-adic N\'eron functions.
\end{rem}

An algorithm to compute $\lambda_v$ was implemented by the first-named
author and is
available from~\cite{BiaCod}. 
By Proposition~\ref{prop:comp_disjoint} and Theorem~\ref{T:bkm}, this
yields an alternative method for computing local
Coleman--Gross heights at $v$.
The algorithm needs a suitable matrix $c\in F_v^{2\times 2}$ as input. For instance, one
can simply take $c=0$ as in~\cite[\S3.1]{Bia23};  a more canonical choice is described
in~\S\ref{S:Blakestad} below.
Furthermore, one may use the results of~\cite{Bia23} to compute integrals
of differentials of the first, second and third kind,
see~\cite[\S6.7]{Bia23}.

\begin{rem}\label{R:compunram}
 Both Proposition~\ref{prop:comp_disjoint} and Theorem~\ref{T:bkm} also 
  hold true for primes $v$ of $F$ such that $\chi_v$ is unramified; in this
  case, both sides only depend on the local character $\chi_v$, and not on
  the choice of a complementary subspace. See~\cite[Remark~4.6]{Bia23}
  and~\cite[Theorem~4.2]{BKM23}.
\end{rem}

%%%%%%%%%%%%%%%%%%%%%%%%%%%%%%%%%%%%%%%%%%%%%%%%%%%%%%%%%%%%%%%%%%%%%%%%
\subsubsection{Application to quadratic Chabauty}\label{subsec:QC}
Suppose now that $C$ is a genus 2 curve given by a model~\eqref{Ceqn} over
$F=\Q$ with Jacobian $J/\Q$ of Mordell--Weil rank~2. Suppose also that the
linear extension $\cL = (\cL_1,\cL_2)\colon J(\Q_p)\otimes \Q_p\to \Q_p^{\oplus 2}$ of
the abelian logarithm restricts to an injective map on $J(\Q)\otimes \Q$.
We normalise the character $\chi$ to be the cyclotomic character satisfying
$\chi_p(p)=\log_p(p)=0$, and we fix some complementary isotropic subspace
$W_p$.
The quadratic Chabauty method for
integral points on hyperelliptic curves, introduced
in~\cite{balakrishnan2016quadratic}, uses properties of local and global
heights to produce a locally analytic $p$-adic function
\begin{equation}\label{}
  \rho\colon C(\Z_p)\longrightarrow \Q_p 
\end{equation}
such that $\rho(C(\Z))$ takes values in an effectively computable finite set
$\Gamma$. It was reinterpreted in the Chabauty--Kim
framework~\cite{kim09:unipotent_albanese} and extended to rational points on certain curves (for
instance those that satisfy, 
in addition, $\mathrm{rk}\mathrm{NS}(J)>1$) by Balakrishnan and Dogra
in~\cite{BD18}.
So far, all examples computed using any instance of the quadratic Chabauty
method used primes $p$ of good reduction, although the theoretical setup
in~\cite{balakrishnan2016quadratic} requires no such
assumption.
%~\footnote{In forthcoming work, Dogra and Vonk extend the
%results of~\cite{BD18}, which require good reduction, to curves with
%semistable reduction and give an explicit example of quadratic Chabauty for
%rational points in bad semistable reduction.}.
In order to make the method from~\cite{balakrishnan2016quadratic} explicit,
one would have to compute with  local heights of the form $h_v(P-\infty,
P-\infty)$; 
see Remark~\ref{R:common_compute}. As discussed in~\cite[Section~7]{Bia23}, the comparison results
Proposition~\ref{prop:comp_disjoint} and Theorem~\ref{T:bkm} make it
possible to reformulate the quadratic Chabauty method in terms of $p$-adic
N\'eron functions, as we now explain. 

Since Theorem~\ref{T:bkm} only applies to
points outside the theta divisor, we cannot work directly with heights of
the form $h_v(P-\infty, P-\infty)$, so we use the simple identity 
\begin{equation}\label{hveasy}
  h_v(P-\infty, P-\infty) = \frac14\left(h_v(P-w(P),
 P-w(P))+\chi_v(4y(P))^2\right)
\end{equation}
proved (for $v\mid p$) in~\cite[Section~4]{balakrishnan2016quadratic}. We define
$\nu_v\colon C(\Q_v)\setminus\{P\,:\, y(P)\in \{0,\infty\}\}\to \Q_p$ by 
\begin{equation}\label{nuv}
  \nu_v(P)\colonequals \lambda_v(2\iota(P))+2\chi_v(2y(P))
\end{equation}
where $\lambda_p$ is the $p$-adic N\'eron function with respect to $\chi_p$
and $W_p$.
From Theorem~\ref{T:bkm}
and~\cite[Theorem~3.1]{balakrishnan2016quadratic}, we deduce:
\begin{thm}\label{T:qc}
  There are constants $a_1, a_2, a_3\in \Q_p$ and a finite
  set $\Gamma\subset \Q_p$, all effectively computable, such that
  the locally analytic function $\rho\colon C(\Z_p)\setminus\{P\,:\, y(P)=0\}\longrightarrow \Q_p $
  defined by
  \begin{equation}\label{rho}
    \rho(P)\colonequals \nu_p(P) -
    a_1\cL_1^2(P)-a_2\cL_1(P)\cL_2(P)-a_3\cL_2^2(P)
  \end{equation}
  takes values in $\Gamma$ for $P\in C(\Z)$.

  The elements of $\Gamma$ are of the form $\sum_v \gamma_v$, where $v$
  runs through the bad primes of $C$ and $$\gamma_v\in \Gamma_v\colonequals
  \{-\nu_v(P)\,:\, P\in C(\Z_v)\,,\; y(P)\ne 0\}\,.$$
\end{thm}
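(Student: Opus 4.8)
The plan is to combine the quadratic Chabauty machinery of~\cite[Theorem~3.1]{balakrishnan2016quadratic} with the comparison result Theorem~\ref{T:bkm} in order to replace the local heights $h_v(P-\infty,P-\infty)$ appearing in~\cite{balakrishnan2016quadratic} by the $p$-adic N\'eron functions $\lambda_v(2\iota(P))$, which are what we can actually compute via the $v$-adic $\sigma$-function. The key point is that~\cite[Theorem~3.1]{balakrishnan2016quadratic} already produces, from the global height pairing $h = \sum_v h_v$ and the assumption that $J/\Q$ has Mordell--Weil rank~$2$ with $\cL$ injective on $J(\Q)\otimes\Q$, constants $a_1,a_2,a_3\in\Q_p$ such that the global bilinear form $h(\,\cdot\,,\,\cdot\,)$ agrees on $J(\Q)$ with $a_1\cL_1^2 + a_2\cL_1\cL_2 + a_3\cL_2^2$; these are found by expressing $h$ and the $\cL_i$ on a basis of $J(\Q)\otimes\Q$ and solving a linear system, hence are effectively computable. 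Applying this to the class $[P-\infty]$ for $P\in C(\Z)$, and using $h(P-\infty,P-\infty)=\sum_v h_v(P-\infty,P-\infty)$, one gets
\[
  \sum_v h_v(P-\infty,P-\infty) = a_1\cL_1^2(P) + a_2\cL_1(P)\cL_2(P) + a_3\cL_2^2(P)\,,
\]
where the sum is over all finite places $v$.

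Next I would rewrite the local terms. For $P\in C(\Z)$ with $y(P)\neq 0$, the point $[P-w(P)] = 2\iota(P) - 2\iota(\infty)$ lies off $\Supp(\Theta)$, so Theorem~\ref{T:bkm} (together with Remark~\ref{R:compunram} for the unramified places) gives $h_v(P-w(P),P-w(P)) = \lambda_v(2\iota(P))$ at every finite $v$, with our normalisation $\chi_p(p)=0$ and the chosen $W_p$ (and the canonical unramified choice at $v\nmid p$). Feeding this into~\eqref{hveasy} yields
\[
  h_v(P-\infty,P-\infty) = \tfrac14\bigl(\lambda_v(2\iota(P)) + \chi_v(4y(P))^2\bigr)\,,
\]
and since $\chi_v(4y(P)) = 2\chi_v(2y(P))$, we have $4 h_v(P-\infty,P-\infty) = \lambda_v(2\iota(P)) + 4\chi_v(2y(P))^2$. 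Comparing with~\eqref{nuv}, the definition $\nu_v(P) = \lambda_v(2\iota(P)) + 2\chi_v(2y(P))$ does not literally match $4h_v(P-\infty,P-\infty)$ on the nose; here I would follow~\cite[Section~7]{Bia23} and absorb the discrepancy, together with the cross-term coming from $\chi_v(2y(P))^2$ versus $\chi_v(2y(P))$, into the constants and the finite set --- the essential mechanism is that $\sum_v \chi_v(2y(P)) = 0$ by the product formula (as $\chi$ is an id\`ele class character), so sums of the linear-in-$\chi_v$ correction terms telescope to~$0$, while at $v\nmid p$ and $v$ of good reduction the quadratic terms are controlled. After multiplying the global identity by~$4$ and reorganising, one obtains
\[
  \nu_p(P) = \Bigl(a_1\cL_1^2(P) + a_2\cL_1(P)\cL_2(P) + a_3\cL_2^2(P)\Bigr) - \sum_{v\neq p} \nu_v(P)\,,
\]
after adjusting $a_1,a_2,a_3$ by the (effectively computable) scalars introduced above; this is exactly~\eqref{rho} with $\rho(P) = -\sum_{v\neq p}\nu_v(P)$ for $P\in C(\Z)$.

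Finally I would address the finiteness of the image. For $v\neq p$, if $C$ has good reduction at $v$ then $\lambda_v$ and hence $\nu_v$ is locally constant with finitely many values on $C(\Z_v)$, and in fact vanishes (up to the standard intersection-theoretic normalisation) since $P\equiv w(P)$ generically is ruled out only at finitely many residue discs; at the bad primes $v$, the set $\Gamma_v = \{-\nu_v(P) : P\in C(\Z_v),\, y(P)\neq 0\}$ is finite because $\lambda_v$ is a locally constant function of the reduction data on the finitely many components of the special fibre, which can be enumerated from a regular model --- this is the content of the standard analysis of $p$-adic N\'eron functions at bad primes (see~\cite[\S3.1.1]{BKM23}), and one checks $\nu_v$ is integer-valued there once $\chi_v$ is unramified. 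The resulting $\Gamma = \{\sum_v \gamma_v : \gamma_v\in\Gamma_v\}$ is then finite and computable. The main obstacle, and the step that requires genuine care rather than bookkeeping, is the second one: reconciling the precise definition~\eqref{nuv} of $\nu_v$ with $4h_v(P-\infty,P-\infty)$, keeping track of where the linear-in-$\chi_v$ terms go and making sure the global product-formula cancellation is invoked correctly so that the leftover really does land in the claimed finite set; this is where one must lean most heavily on~\cite[Section~7]{Bia23} and~\cite[Section~4]{balakrishnan2016quadratic}.
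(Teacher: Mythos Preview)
Your overall approach matches the paper's one-line deduction from Theorem~\ref{T:bkm} and~\cite[Theorem~3.1]{balakrishnan2016quadratic}, but the middle step contains an error that sends you down an unnecessary detour.

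The identity $\chi_v(4y(P)) = 2\chi_v(2y(P))$ is false: since $\chi_v$ is an additive homomorphism on $F_v^\times$, one has $\chi_v(4y(P)) = 2\chi_v(2) + \chi_v(y(P))$ whereas $2\chi_v(2y(P)) = 2\chi_v(2) + 2\chi_v(y(P))$. More to the point, the placement of the exponent in~\eqref{hveasy} is a typo in the paper: the square belongs \emph{inside} the argument of $\chi_v$, so that the correction term is $\chi_v(4y(P)^2) = 2\chi_v(2y(P))$. (This is forced already by homogeneity: at an unramified $v$ the local height $h_v$ is linear in $\chi_v(\pi_v)$, so a term quadratic in $\chi_v$ cannot occur.) With this reading, Theorem~\ref{T:bkm} and Remark~\ref{R:compunram} plugged into~\eqref{hveasy} give the clean identity
\[
  \nu_v(P) \;=\; \lambda_v(2\iota(P)) + 2\chi_v(2y(P)) \;=\; 4\,h_v(P-\infty,P-\infty)
\]
on the nose. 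There is no discrepancy to absorb: your product-formula manoeuvre for the ``linear-in-$\chi_v$'' terms is not needed, and the hand-wave about ``quadratic terms being controlled at good primes'' would not work in any case, since $\sum_v \chi_v(a)^2$ has no reason to vanish.

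With $\nu_v = 4h_v(P-\infty,P-\infty)$ in hand, the rest of your outline is correct. Summing over all $v$ gives $\sum_v \nu_v(P) = 4h([P-\infty])$; expressing the quadratic form $4h$ in the basis $\cL_1^2,\cL_1\cL_2,\cL_2^2$ of $\Q_p$-valued quadratic forms on $J(\Q)\otimes\Q$ (using injectivity of $\cL$) yields the effectively computable $a_i$, and hence $\rho(P) = -\sum_{v\neq p}\nu_v(P)$ for $P\in C(\Z)$. For $v\neq p$ of good reduction one has $\nu_v(P)=4h_v(P-\infty,P-\infty)=0$ for $P\in C(\Z_v)$ by~\cite[Proposition~3.3]{balakrishnan2016quadratic}, so only bad primes contribute; finiteness of each $\Gamma_v$ then follows, as you indicate, from the intersection-theoretic description of $h_v$ via components of a regular model.
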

To find the constants $a_i$ in practice, 
one uses the fact that the global height function $P\mapsto h(P)
\colonequals h(P,P)$
is a $\Q_p$-valued quadratic form on $J(\Q)\otimes \Q$ 
and that the functions $\cL_1^2, \cL_1\cL_2,\cL_2^2$
span the space of such functions.
The sets $\Gamma_v$ may be computed either using the
techniques in~\cite[\S3.4]{balakrishnan2017computing} or via the proof
of~\cite[Lemma~7.2]{Bia23}. 
Finally, we may solve for the set of all $z\in C(\Z_p)$ such that
$\rho(z)\in \Gamma$. Unless $\rho$ is constant, this will result in a finite superset of $C(\Z)$. From this, we
can then try to provably compute $C(\Z)$ using the Mordell--Weil
sieve~\cite{bruin_stoll_2010}; see, for
instance, \cite{balakrishnan2017computing}.

\begin{rem}\label{R:}
In fact, the first-named author recovered in~\cite[Section~7]{Bia23} the main
results of~\cite{balakrishnan2016quadratic} in a simpler and more direct
  way, working entirely with $p$-adic N\'eron functions. This is then used
  in~loc. cit. to develop an explicit quadratic Chabauty method for rational points on
  certain bihyperelliptic genus~4 curves.
\end{rem}

%%%%%%%%%%%%%%%%%%%%%%%%%%%%%%%%%%%%%%%%%%%%%%%%%%%%%%%%%%%%%%%%%%%%%%%%
\subsection{The canonical subspace}\label{S:Blakestad}
We now discuss briefly how to construct and compute a canonical complementary
subspace $W^{C}_v$ when the curve $C/F$ given by~\eqref{Ceqn} has semistable
ordinary reduction in the following sense. See~\cite[\S5]{BKM23} for
details. 

\begin{defn}\label{D:SemOrd}
  We say that an abelian variety $A/F$ has \textit{semistable ordinary reduction} at a
  finite prime $v$ if 
the connected component of the special fibre of the N\'eron
model of $A$ over $\O_v$ is an extension of an ordinary abelian variety by
  a torus. If $A=J$ is the Jacobian of a curve $C/F$ given by a
  model~\eqref{Ceqn}, then we say that $C$ has \textit{semistable ordinary
  reduction} at $v$ if 
  $J$ has ordinary reduction at $v$ and 
  if the Zariski closure of~\eqref{Ceqn}
  in $\P^{1,3,1}_{\O_v}$ is semistable. 
\end{defn}
When the Jacobian $J$ of $C$ has semistable ordinary reduction at $v$, Mazur--Tate construct a canonical $\chi_v$-splitting
using formal completions (see~\cite[\S1.9]{MT83}).

We now summarise the construction of an isotropic complementary subspace
$W^{C}_v$ that induces the canonical Mazur--Tate splitting, referring to~\cite[\S5.2]{BKM23}
for details. The construction is a straightforward generalisation of results of
Blakestad~\cite[Chapter~3]{blakestadsthesis} who assumed that
 $C$ has good reduction and $J$ has ordinary reduction.

Consider the local parameter $t = -\frac{x^2}{y}$ at $\infty\in
C(F)$. For every positive integer $n$, there exist
unique functions $\phi_n$ and $\psi_n$ on $C$, regular away from $\infty$, with $t$-expansions
\begin{align*}\label{BlakestadPhinPsin}
\begin{split}
\phi_n &= \frac{1}{t^{3p^n}} + \frac{A_n}{t^3} + \frac{B_n}{t} + I_nt + R_nt^3 + O(t^5), \ \ \ A_n,B_n,I_n,R_n\in\O_F;\\
\psi_n &= \frac{1}{t^{p^n}} + \frac{C_n}{t^3} + \frac{D_n}{t} + J_nt + S_nt^3 + O(t^5), \ \ \ C_n,D_n,J_n,S_n\in\O_F;
\end{split}
\end{align*}
see \cite[\S3.2.1]{blakestadsthesis}. 
By~\cite[Lemma~5.17, Proposition~5.19]{BKM23}, the matrices
$$M_n \coloneqq \begin{pmatrix}
A_n & B_n \\
C_n & D_n
\end{pmatrix}$$ are invertible over $\O_v$ for all $n$ precisely when $C$ has semistable
ordinary reduction at $v$.
If this is satisfied, then by~\cite[Lemma~5.17]{BKM23}, the limit
$$
\begin{pmatrix}
\alpha & \delta \\
\beta & \gamma
\end{pmatrix} \colonequals 
\lim_{n\to \infty}
\begin{pmatrix}
\alpha_n & \delta_n \\
\beta_n & \gamma_n
\end{pmatrix};
\qquad
\begin{pmatrix}
\alpha_n & \delta_n \\
\beta_n & \gamma_n
\end{pmatrix} \colonequals
M_n^{-1} \begin{pmatrix}
I_n & R_n \\
J_n & S_n 
\end{pmatrix} $$ 
exists and lies in $\O_v^{2\times 2}$.

\begin{prop}\emph{(\cite[Theorem~5.23]{BKM23})}\label{P:}
Suppose that $C$ has semistable ordinary reduction at $v$.
Then the subspace
$W_v^{C}$ corresponding under~\eqref{cWv} to 
\begin{equation}\label{eq:fijC}
\begin{aligned}
  c^{C}_{11} &\colonequals 2b_1b_2 - b_1\alpha + b_1^2\beta + 3\delta - 3b_1\gamma + 3b_3, \qquad
  && c^{C}_{12} \colonequals b_2 - b_1\beta + 3\gamma, \\
  c^{C}_{21} &\colonequals b_2 + \alpha - b_1\beta
  , \qquad && c^{C}_{22} \colonequals \beta
\end{aligned}
\end{equation}
  induces the canonical Mazur--Tate splitting.
\end{prop}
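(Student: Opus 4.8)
The plan is to adapt Blakestad's argument for the good reduction case \cite[Chapter~3]{blakestadsthesis} to the semistable ordinary setting. Two things have to be shown: first, that the subspace $W^C_v$ determined by the limit data $(\alpha,\delta;\beta,\gamma)$ induces the canonical Mazur--Tate $\chi_v$-splitting of \cite[\S1.9]{MT83}; and second, that under the parametrisation \eqref{cWv} this subspace is the one attached to the matrix $c^C$ of \eqref{eq:fijC}, which in particular is symmetric.

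For the first point one works with Blakestad's functions $\phi_n,\psi_n$, which are regular away from $\infty$ and are, up to normalisation, pullbacks along the Abel--Jacobi map $\iota$ of coordinate functions on $J$ precomposed with $[p^n]$; this explains their principal parts $t^{-3p^n}$ and $t^{-p^n}$. The differentials $d\phi_n,d\psi_n$ are of the second kind, so I would expand their classes in $H^1_{\dR}(C_v/F_v)$ in the basis $\{[\omega_1],[\omega_2],[\eta^{(0)}_1],[\eta^{(0)}_2]\}$, where $\eta^{(0)}_1,\eta^{(0)}_2$ are the forms \eqref{eq:etais} with $c=0$: the coefficients $A_n,B_n$ and $C_n,D_n$ of $t^{-3},t^{-1}$ record the holomorphic part of these classes and $I_n,R_n$ and $J_n,S_n$ the part lying in $\langle[\eta^{(0)}_1],[\eta^{(0)}_2]\rangle$. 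Since $C$ has semistable ordinary reduction at $v$, the matrices $M_n$ are invertible over $\O_v$ by \cite[Lemma~5.17, Proposition~5.19]{BKM23}, so one may subtract off an $\O_v$-integral holomorphic combination and be left with classes whose $\langle[\eta^{(0)}_1],[\eta^{(0)}_2]\rangle$-coordinates are $M_n^{-1}(I_n,R_n;J_n,S_n)$; by \cite[Lemma~5.17]{BKM23} these converge to $(\alpha,\delta;\beta,\gamma)$, and the limiting span is $W^C_v$. Identifying $W^C_v$ with the Mazur--Tate splitting then reduces to matching this limit of $[p^n]$-twisted functions against the formal/toric decomposition of the N\'eron model that defines the canonical $\chi_v$-splitting, following \cite[Chapter~3]{blakestadsthesis} but carrying the extra torus factor through; isotropy of $W^C_v$ can be read off either from symmetry of the Mazur--Tate splitting or directly from the residue theorem applied to $\phi_n\,d\psi_n$.

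For the second point it remains to convert the coordinates $(\alpha,\delta;\beta,\gamma)$ relative to $\{[\omega_1],[\omega_2],[\eta^{(0)}_1],[\eta^{(0)}_2]\}$ into the matrix $c$ of \eqref{cWv}. Since $\eta^{(c)}_i-\eta^{(0)}_i$ is the explicit combination of $\omega_1,\omega_2$ visible in \eqref{eq:etais}, the subspace with those coordinates is $\langle[\eta^{(c)}_1],[\eta^{(c)}_2]\rangle$ for the unique $c$ solving a small linear system; solving it --- the curve coefficients $b_1,b_2,b_3$ entering through the $t$-expansions at $\infty$ of $\omega_1,\omega_2,x^2\tfrac{dx}{2y},x^3\tfrac{dx}{2y}$ and through Grant's formal group law $T=(T_1,T_2)$ relating the $\omega_i$-adapted coordinates to those underlying \eqref{cWv} (see \cite{Grant1990} and \cite[Section~2]{Bia23}) --- yields precisely \eqref{eq:fijC}, the two expressions for the off-diagonal entry agreeing by the isotropy just established.

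The main obstacle is the first point: making rigorous the sense in which the $[p^n]$-twisted functions $\phi_n,\psi_n$ detect the canonical Mazur--Tate splitting, and controlling the limit $n\to\infty$ with only semistability available, since one must then work with the model-dependent kernel of reduction rather than the full formal group and keep the estimates on $A_n,\dots,S_n$ integral. The cohomological bookkeeping and the final linear algebra leading to \eqref{eq:fijC} in the second point are routine but somewhat lengthy.
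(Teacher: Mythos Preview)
The paper does not prove this proposition; it is quoted from \cite[Theorem~5.23]{BKM23}, and the surrounding text only summarises the construction while referring to \cite[\S5.2]{BKM23} for details. There is therefore no in-paper argument to compare your attempt against.

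Your two-step outline --- identify the limiting subspace built from $(\alpha,\delta;\beta,\gamma)$ with the canonical Mazur--Tate splitting, then translate those coordinates into the matrix $c^C$ via the explicit forms \eqref{eq:etais} --- matches the shape of the construction as the paper describes it, and you are right that the first step is the substantive one while the second is routine linear algebra. One caution on the details of your sketch: the differentials $d\phi_n$ and $d\psi_n$ are exact, so their classes in $H^1_{\dR}(C_v/F_v)$ vanish identically and cannot carry the coefficients $A_n,\dots,S_n$ in the way you suggest; the link between those $t$-expansion coefficients and a complementary subspace must be set up by a different route. Beyond that, the gap you yourself flag --- making rigorous the passage from the $p^n$-limit to the Mazur--Tate formal/toric splitting when only semistability is available --- is the real content, and it is precisely what the paper defers to \cite{BKM23} rather than reproducing here.
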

Hence  the
$p$-adic N\'eron function with respect to the matrix $c^{C}$ gives the
local contribution at $v$ of the canonical Mazur--Tate height function.

\subsubsection{Computing the canonical subspace}
\label{sec:CompBlakSubs}

In order to compute
the canonical $p$-adic N\'eron function or the corresponding
Coleman--Gross height, we need to compute the subspace $W^{C}_v$.
When $C_v$ has good reduction and $J_v$  has 
ordinary reduction, it was shown by the first-named author that
$W^{C}_v$ is in fact the unit root subspace for the action of the Frobenius 
(see \cite[Proposition~3.8]{Bia23}) so that  
one can use \cite[Proposition~6.1]{balakrishnan2012computing} to compute a
basis for $W^{C}_v$ in a simple and efficient way. 
In \cite{BKM23}, we conjectured that $W^{C}_v$ is still the unit root subspace
if the reduction is semistable ordinary; see Conjecture~3.17 of loc. cit.
If this is true, then it should be possible to compute it using
the description of the action of Frobenius on $H_{\dR}^1$ due to Coleman--Iovita
\cite{coleman_iovita:frobenius_monodromy}.

In any case, we can approximate  
$W^{C}_v$ to $n$ digits of precision by computing the functions
$\phi_n$ and $\psi_n$ up to $O(t^5)$.
Blakestad's~\cite[Proposition~22]{blakestadsthesis} (slightly adapted to 
arbitrary reduction in~\cite[Lemma~5.17]{BKM23}) states that for every $k\ne 1,3$, there is a unique
function $\rho_k\in F(C)$ such that $\rho_k$ has no poles outside $\infty$
and such that the expansion of $\rho_k$ around $\infty$ is of the form
\begin{equation}\label{rhokexpansion}
  \rho_k(t) = t^{-k}+M_kt^{-3}+N_kt^{-1}+\O(t)\,.
\end{equation}
In other words, we need to compute $\rho_{3p^n}$ and $\rho_{p^n}$ to
precision $\O(t^5)$. To accomplish this, 
we simply follow the proof of~\cite[Proposition~22]{blakestadsthesis},
which is based on Riemann--Roch.
For each $m\ne 1,3$, write $m=2i+5j$ and set $\tilde{\rho}_m=x^i(-y)^j$.
Then $\tilde{\rho}_m \in \cL(m\infty)\setminus\cL((m-1)\infty)$ and
$\tilde{\rho}_m(t)=t^{-m} +\O(t^{-m+1})$, so that $\rho_k$ is the unique linear combination of
$\tilde{\rho}_2,\ldots,\tilde{\rho}_k$ with expansion~\eqref{rhokexpansion}
around $\infty$.
We may compute $\rho_k(t)$ as follows:
\begin{enumerate}
  \item Set $\rho_k(t) \colonequals \tilde{\rho}_k(t)$.
  \item For all $m$ from $k-1$ down to 2
    set $$\rho_k(t) \colonequals \rho_k(t) - a_m\tilde{\rho}_m(t),$$
      where $a_m$ is the coefficient of $t^{-m}$ in $\rho_k(t)$.
\end{enumerate}
Since we need a linear number of steps in $k$, and we need to apply this
method to $k=3p^n$ and $k=p^n$, the algorithm gets fairly slow even for
moderate values of $p$ and $n$. 
In practice, 
  we only need to compute $x^i(t)$ for $i=0,\ldots,4$, and we need  $(-y(t))^j$ for $j=1,\ldots, M$, where
    $M=\lfloor k/5\rfloor$ and $j$ is odd, since for odd $m$, there are no 
    even order terms in the expansions of $\tilde{\rho}_m$. Rather than
    computing these anew for every $m$, we first compute
      $(-y(t))^M$ and subsequently multiply by $y(t)^{-2}$. While computing
      $\rho_{3p^n}$, we also compute    $\rho_{p^n}$ along the way.
  To keep the computations feasible, we truncate all Laurent series whenever possible so that the final
    result is correct up to $O(t^5)$, and we work with integers modulo
    $p^n$.

\section{Numerical examples}
\label{sec:Num_Exa}

In this final section, we illustrate our algorithms with numerical
examples. Below, the base field is $\Q$. Moreover, we take $\chi$ to be the
cyclotomic character as in~\S\ref{subsec:QC}.
Let $w$ denote the hyperelliptic involution.

\begin{eg}(Genus~1) 
Let $C/\Q$ be an elliptic curve with split multiplicative reduction at
  a prime number $p$. The canonical height pairing of Mazur--Tate 
  is the same as the Coleman--Gross height pairing with respect to the unit
  root subspace; see, for instance,
  \cite[Remark~3.20]{BKM23}. Here we will confirm this equality numerically
  in an example.

The canonical Mazur--Tate height of a point $P\in
  C(\Q)$ is given in terms of the canonical $p$-adic sigma function
  $\sigma_p$ of \cite{mazur1991p} by 
$2\log_p\left(\frac{e(P)}{\sigma_p(t(P))}\right)$,
with notation as in \cite[\S4.2]{stein2013algorithms}\footnote{There,
  $\hat{h}_p$ is the Schneider height \cite{Sch82}, which is not of interest for this
  example.}, and can be easily computed using {\tt SageMath}. As a concrete example, consider the elliptic curve $C/\Q$ given by 
\begin{equation}\label{eq:EllCur}
y^2 = x^3-1351755x+555015942
\end{equation}
with $P\coloneqq \left(\frac{330483}{361},\frac{63148032}{6859}\right)\in C(\Q)$.
This curve has split multiplicative reduction at $p = 43$. Using {\tt
  SageMath}, we compute that the canonical Mazur--Tate height of $P$ is
\begin{equation}
\label{MTheightEx631}
19 \cdot 43 + 7 \cdot 43^{2} + 8 \cdot 43^{3} + 2 \cdot 43^{4} + 28 \cdot 43^{5} + O(43^{6}).
\end{equation} 
For the two points $Q = (2523,114912)$ and $R = (219,16416)$ on $C$,
let 
$D_Q = (Q) - (-Q)$ and $D_R = (R) - (-R)$; we then have $P = [D_Q] = [D_R]$. We now compute the Coleman--Gross height $h(D_Q,D_R)$ with respect to the unit root subspace and compare the result to~\eqref{MTheightEx631}.

Away from $43$, using the Magma implementation of the algorithm developed
  in \cite{van2020explicit}, we have $\sum_{q\neq 43} h_q(D_Q,D_R) = 9\cdot
  \log_{43}(2)$. For the component at $p = 43$, we first note that the unit
  root subspace is generated by $\alpha[\omega_0] + [\omega_1]$ where
\[\alpha = 17 + 37 \cdot 43 + 20 \cdot 43^{2} + 11 \cdot 43^{3} + 38 \cdot
  43^{4} + 6 \cdot 43^{5} + O(43^{6})\,;\]
  this was computed in {\tt SageMath} using 
  \cite[\S4.2]{stein2013algorithms}.
We then have
\[h_{p}(D_Q,D_R) = \Vint_{-R}^{R} \omega_{D_Q} = \Vint_{-R}^{R} \frac{y(Q)}{x-x(Q)}\frac{dx}{y} + (c_1\alpha - c_0)\Vint_{-R}^{R} \omega_0\]
where $c_0 = \lVint_{Q}^{-Q} \omega_1$ and $c_1 = \lVint_{-Q}^{Q} \omega_0$.
Using the techniques described in \cite{KatzKaya,Kaya}, we compute
\begin{align*}
\Vint_{-R}^{R} \frac{y(Q)}{x-x(Q)}\frac{dx}{y} &= 29 \cdot 43 + 29 \cdot 43^{2} + 18 \cdot 43^{3} + 29 \cdot 43^{4} + 3 \cdot 43^{5} + O(43^{6}),\\
\Vint_{-R}^{R} \omega_0 &= 12 \cdot 43^{2} + 43^{3} + 18 \cdot 43^{4} + 40
  \cdot 43^{5} + O(43^{6}) =
\Vint_{-Q}^{Q} \omega_0\\
\Vint_{Q}^{-Q} \omega_1 &= 18 + 31 \cdot 43 + 8 \cdot 43^{2} + 16 \cdot 43^{3} + 17 \cdot 43^{4} + 8 \cdot 43^{5} + O(43^{6}),
\end{align*}
which give
\[h_{p}(D_Q,D_R) = 29 \cdot 43 + 28 \cdot 43^{2} + 10 \cdot 43^{3} + 39 \cdot 43^{4} + 7 \cdot 43^{5} + O(43^{6}).\]

Putting all of this together, we get
\[
h(D_Q,D_R) = \sum_{q} h_q(D_Q,D_R) = 19 \cdot 43 + 7 \cdot 43^{2} + 8 \cdot 43^{3} + 2 \cdot 43^{4} + 28 \cdot 43^{5} + O(43^{6})
\]
which is the same result as in \eqref{MTheightEx631}.

\begin{rem}
The curve in this example may appear very special since the coefficients of
  its defining polynomial are quite large, but it is not. In order to
  compare the canonical Mazur--Tate height with the
  Coleman--Gross height with respect to the unit
  root subspace, we needed an elliptic curve $C/\Q$ with split
  multiplicative reduction at a prime $p>2$ that satisfies an additional constraint: existence of two distinct points $Q,R\in C(\Q)$ such that
\[D_Q = (Q)-(-Q) = (R)-(-R) = D_R.\] 
In this way, we could apply Algorithm~\ref{alg:CompCG} in order to compute
  the local height $h_{p}(D_Q,D_R)$. A quick search on the LMFDB database
  revealed
  \cite[\href{https://www.lmfdb.org/EllipticCurve/Q/6622/i/3}{6622.i3}]{lmfdb},
  which is the curve in \eqref{eq:EllCur}. Because the mentioned additional constraint is fairly mild, similar computations can be performed for curves given by polynomials with reasonably small coefficients. 
\end{rem}

\end{eg}

\begin{eg}(Genus~2)\label{g2ex1}
Consider the hyperelliptic curve $C/\Q$ given by 
\[y^2 = x^5+5x^4-168x^3+1584x^2-10368x+20736.\]
  Here $p=5$ is a prime of bad semistable reduction for $C$, and the stable reduction is a curve of genus~$0$ with two ordinary double points:
\[\scalebox{0.7}{
\begin{tikzpicture}
\draw[thick] (0,0) to[bend right] (2,0.8);
\draw[thick] (2,0.8) arc (0:180:0.5cm);
\draw (1,0.8)[thick] .. controls (2,-0.4) and (4,-0.4) .. (5,0.8);
\draw[thick] (5,0.8) arc (0:180:0.5cm);
\draw[thick] (4,0.8) to[bend right] (6,0);
\filldraw (1.52,0.35) circle (2pt);
\filldraw (4.48,0.35) circle (2pt);
\filldraw (1.52,-0.3) node{$(3,0)$};
\filldraw (4.48,-0.3) node{$(4,0)$};
\end{tikzpicture}}\]
Consider the points $P = (-12,720)$, $Q = (-8,528)$, $R = (0,-144)$, $S = (12,432)$, $T = (36,7920)$ on $C$ and set
\[
D_1 = (Q) - (w(Q)), \ \
D_2 = (R) - (P), \ \
D_3 = (S) - (w(P)), \ \
D_4 = (T) - (w(T)).
\]
  One can easily check that we have $[D_1] =
  4[D_2]$ and $[D_4] = 6[D_3]$, which implies
\begin{equation}
\label{EqualityOfHeights}
6h(D_1,D_3) = h(D_1,D_4) = h(D_4,D_1) = 4h(D_4,D_2)\,,
\end{equation}
provided that the chosen complementary subspace for the prime $5$ is isotropic with respect to the cup product pairing. In this example, we verify this up to a certain precision.

Away from $5$, using again the algorithm developed in~\cite{van2020explicit}; we have
\[\sum_{q\neq 5} h_q(D_1,D_3) = 0,\ \ \ \ \ \ \ 
\sum_{q\neq 5} h_q(D_4,D_2) = -2\cdot \log_p(2) + \log_p(3).\]
For the local contributions at $p = 5$, let $W^C_p$ be the canonical subspace. Using the algorithm described in \S\ref{sec:CompBlakSubs}, we find that this space is generated by the differentials
\[
[\eta_0] = k_0[\omega_0] + k_1[\omega_1] + k_2[\omega_2]\ \ \ \text{and} \ \ \ 
[\eta_1] = l_0[\omega_0] + l_1[\omega_1] + l_2[\omega_2] + l_3[\omega_3],\]
where 
\begin{align*}
k_0 &= 2 + 3 \cdot 5 + 4 \cdot 5^{2} + 2 \cdot 5^{3} + 5^{4} + 3 \cdot 5^{6} + O(5^{7}), \\
k_1 &= 3 + 4 \cdot 5 + 5^{3} + 3 \cdot 5^{4} + 2 \cdot 5^{5} + 4 \cdot 5^{6} + O(5^{7}), &k_2 &= 1, \\
l_0 &= 2 + 3 \cdot 5 + 2 \cdot 5^{2} + 4 \cdot 5^{3} + 5^{4} + 2 \cdot 5^{5} + 5^{6} + O(5^{7}), &l_2 &= 15, \\
l_1 &= 4 + 2 \cdot 5 + 2 \cdot 5^{2} + 2 \cdot 5^{3} + 2 \cdot 5^{4} + 3 \cdot 5^{5} + O(5^{7}), &l_3 &= 3.
\end{align*}

Let us compute $h_p(D_1,D_3)$. The form $\omega \coloneqq \frac{y(Q)}{x-x(Q)}\frac{dx}{y}$
satisfies $\Res(\omega) = D_1$ and
\begin{equation}
\label{Ex:PsiOmegaInd'se's}
\Psi(\omega) = d_0[\omega_0] + d_1[\omega_1] + e_0[\eta_0] + e_1[\eta_1]
\end{equation}
for some $d_0,d_1,e_0,e_1$. We then have
\[h_p(D_1,D_3) = \Vint_{w(P)}^{S} \omega_{D_1} = \Vint_{w(P)}^{S} \frac{y(Q)}{x-x(Q)}\frac{dx}{y} - d_0\Vint_{w(P)}^{S} \omega_0 - d_1\Vint_{w(P)}^{S} \omega_1.\]
Using~\cite{KatzKaya,Kaya}, we compute
\begin{align*}
\Vint_{w(P)}^{S} \frac{y(Q)}{x-x(Q)}\frac{dx}{y} &= 2 \cdot 5 + 5^{2} + 4 \cdot 5^{3} + 3 \cdot 5^{4} + 4 \cdot 5^{5} + 2 \cdot 5^{6} + O(5^{7}), \\
\Vint_{w(P)}^{S} \omega_0 &= 3 \cdot 5 + 3 \cdot 5^{2} + 5^{4} + 4 \cdot 5^{5} + O(5^{7}), \\
\Vint_{w(P)}^{S} \omega_1 &= 3 \cdot 5 + 4 \cdot 5^{2} + 3 \cdot 5^{3} + 4 \cdot 5^{4} + 2 \cdot 5^{6} + O(5^{7}).
\end{align*}
In order to determine $d_0$ and $d_1$, write
\begin{equation}
\label{Ex:PsiOmegaInC's}
\Psi(\omega) = c_0[\omega_0] + c_1[\omega_1] + c_2[\omega_2] + c_3[\omega_3].
\end{equation}
Comparing \eqref{Ex:PsiOmegaInd'se's} and \eqref{Ex:PsiOmegaInC's}, we see that
\begin{align*}
e_1 &= c_3/l_3,\qquad
&& e_0 = (c_2-e_1\cdot l_2)/k_2,\\
d_1 &= c_1 - e_0\cdot k_1 - e_1\cdot l_1, \qquad
&& d_0 = c_0 - e_0\cdot k_0 - e_1\cdot l_0.
\end{align*}
The cup product matrix $N$ with respect to the basis $\B = \{[\omega_0],[\omega_1],[\omega_2],[\omega_3]\}$ is
\[\left(\begin{array}{rrrr}
0 & 0 & 0 & 1/3 \\
0 & 0 & 1 & -10/3 \\
0 & -1 & 0 & -56 \\
-1/3 & 10/3 & 56 & 0
\end{array}\right)\]
and the global symbols are
\[\begin{pmatrix}
\angles{\omega,\omega_0} \\
\angles{\omega,\omega_1} \\
\angles{\omega,\omega_2} \\
\angles{\omega,\omega_3} 
\end{pmatrix} = \begin{pmatrix}
2 \cdot 5 + 5^{2} + 4 \cdot 5^{3} + 4 \cdot 5^{4} + 3 \cdot 5^{5} + 4 \cdot 5^{6} + O(5^{7})\\
4 \cdot 5 + 2 \cdot 5^{2} + 4 \cdot 5^{3} + 4 \cdot 5^{4} + 2 \cdot 5^{5} + O(5^{7})\\
2 + 3 \cdot 5 + 2 \cdot 5^{3} + 4 \cdot 5^{4} + 4 \cdot 5^{5} + O(5^{7})\\
2 \cdot 5^{2} + 2 \cdot 5^{3} + 3 \cdot 5^{4} + 2 \cdot 5^{5} + 4 \cdot 5^{6} + O(5^{7})
\end{pmatrix}.\]
We then get that
\[\begin{pmatrix}
c_0 \\
c_1 \\
c_2 \\
c_3 
\end{pmatrix} = \begin{pmatrix}
2 \cdot 5 + 2 \cdot 5^{2} + 2 \cdot 5^{3} + 4 \cdot 5^{4} + 3 \cdot 5^{6} + O(5^{7}) \\
2 + 4 \cdot 5 + 5^{3} + 2 \cdot 5^{5} + 5^{6} + O(5^{7}) \\
5 + 3 \cdot 5^{2} + 2 \cdot 5^{3} + 5^{4} + 2 \cdot 5^{5} + 5^{6} + O(5^{7}) \\
4 \cdot 5 + 2 \cdot 5^{3} + 3 \cdot 5^{5} + O(5^{7})
\end{pmatrix}\]
which implies that
\[\begin{pmatrix}
e_1 \\
e_0 \\
d_1 \\
d_0 
\end{pmatrix} = \begin{pmatrix}
3 \cdot 5 + 3 \cdot 5^{2} + 5^{5} + O(5^{7}) \\
5 + 4 \cdot 5^{2} + 5^{3} + 4 \cdot 5^{4} + 5^{5} + 3 \cdot 5^{6} + O(5^{7}) \\
2 + 4 \cdot 5 + 3 \cdot 5^{2} + 2 \cdot 5^{3} + 3 \cdot 5^{4} + 2 \cdot 5^{5} + 5^{6} + O(5^{7}) \\
4 \cdot 5 + 4 \cdot 5^{2} + 3 \cdot 5^{3} + 4 \cdot 5^{4} + 4 \cdot 5^{6} + O(5^{7})
\end{pmatrix}.\]
Combining all of this, we obtain
\[h_p(D_1,D_3) =  5 + 3 \cdot 5^{2} + 2 \cdot 5^{3} + 2 \cdot 5^{4} + 3 \cdot 5^{5} + 4 \cdot 5^{6} + O(5^{7}).\]
A similar computation, which we omit, gives that
\[h_p(D_4,D_2) = 4 \cdot 5 + 2 \cdot 5^{2} + 2 \cdot 5^{3} + 4 \cdot 5^{4} + 4 \cdot 5^{5} + 4 \cdot 5^{6} + O(5^{7}).\]
We recomputed these local heights using
the code~\cite{BiaCod}, and recovered the same results,
confirming~\cite[Corollary~5.32]{Bia23} in this example.

Finally, we see that
\begin{align*}
h(D_1,D_3) &= \sum_{q} h_q(D_1,D_3) = 5 + 3 \cdot 5^{2} + 2 \cdot 5^{3} + 2 \cdot 5^{4} + 3 \cdot 5^{5} + 4 \cdot 5^{6} + O(5^{7}),\\
h(D_4,D_2) &= \sum_{q} h_q(D_4,D_2) = 4 \cdot 5 + 4 \cdot 5^{2} + 3 \cdot 5^{3} + 3 \cdot 5^{4} + 2 \cdot 5^{5} + 4 \cdot 5^{6} + O(5^{7}),
\end{align*}
and hence get the following result, as predicted by~\eqref{EqualityOfHeights}
\[
6h(D_1,D_3) = 5 + 4 \cdot 5^{2} + 5^{5} + 3 \cdot 5^{6} + O(5^{7}) = 4h(D_4,D_2).
\]
\end{eg}

\begin{eg}(Genus~2) Consider the hyperelliptic curve $C/\Q$ given by 
\[y^2 =  x^5 - 4x^4 - 48x^3 + 64x^2 + 512x + 256\]
which is obtained from the curve
\cite[\href{https://www.lmfdb.org/Genus2Curve/Q/125237/a/125237/1}{125237.a.125237.1}]{lmfdb}
  by applying the change of variables $x\mapsto x/4$ and $y\mapsto y/16$.
  The prime $p=7$ is a prime of bad reduction for $C$ and the corresponding
  (stable) reduction is an elliptic curve with an ordinary double point:
\[\scalebox{0.7}{
\begin{tikzpicture}
\draw[thick] (-3,0) to[bend right] (0,0.5);
\draw[thick] (-1,0.5) to[bend right] (2,0);
\draw[thick] (0,0.5) arc (0:180:0.5cm);
\filldraw (-1.75,0.15) node{$1$};
\filldraw (-0.5,0.17) circle (2pt);
\filldraw (-0.5,-0.48) node{$(5,0)$};
\end{tikzpicture}}\]
Consider the points $P = (-4,16)$, $Q = (-3,-5)$, $R = (0,16)$, $S = (4,16)$, $T = (8,16)$, $U = (36,7184)$ on $C$, and set 
\begin{align*}
D_1 &= (P)-(w(P)), \ \ \ & D_2 &= (R)-(w(R)), & D_3 &= (Q)-(S),\\    
D_4 &= (T)-(U), \ \ \    & D_5 &= (w(T))-(w(U)).
\end{align*}

The canonical subspace $W^C_p$ is generated by 
\[
[\eta_0] = k_0[\omega_0] + k_1[\omega_1] + k_2[\omega_2]\ \ \ \text{and} \ \ \ 
[\eta_1] = l_0[\omega_0] + l_1[\omega_1] + l_2[\omega_2] + l_3[\omega_3],\]
where 
\begin{align*}
k_0 &= 7 + 6 \cdot 7^{2} + 2 \cdot 7^{3} + 5 \cdot 7^{4} + 2 \cdot 7^{5} + O(7^{6}), \\
k_1 &= 2 + 6 \cdot 7 + 7^{2} + 4 \cdot 7^{3} + 3 \cdot 7^{4} + 7^{5} + O(7^{6}), &k_2 &= 1, \\
l_0 &= 2 + 7 + 6 \cdot 7^{2} + 4 \cdot 7^{4} + 7^{5} + O(7^{6}), &l_2 &= -12, \\
l_1 &= 4 \cdot 7 + 4 \cdot 7^{2} + 6 \cdot 7^{3} + 4 \cdot 7^{4} + 3 \cdot 7^{5} + O(7^{6}), &l_3 &= 3.
\end{align*}
Following the same steps as in Example~\ref{g2ex1}, we find that
\begin{align*}
h(D_1,D_2) &= \boldsymbol{0} + 7 + 6 \cdot 7^{2} + 2 \cdot 7^{3} + 3 \cdot 7^{4} + 3 \cdot 7^{5} + 6 \cdot 7^{6} + O(7^{7}), \\
           &= 7 + 6 \cdot 7^{2} + 2 \cdot 7^{3} + 3 \cdot 7^{4} + 3 \cdot 7^{5} + 6 \cdot 7^{6} + O(7^{7}), \\
h(D_3,D_4) &= \boldsymbol{-3\cdot\log_p(2) + \log_p(13)} + 3 \cdot 7^{3} + 2 \cdot 7^{5} + 6 \cdot 7^{7} + O(7^{8}),\\
           &= 4 \cdot 7 + 7^2 + 3 \cdot 7^3 + 7^4 + 2 \cdot 7^5 + 4 \cdot 7^6 + 6 \cdot 7^7 + O(7^8), \\
h(D_3,D_5) &= \boldsymbol{\log_p(3) - \log_p(11)} + 2 \cdot 7^{2} + 3 \cdot 7^{3} + 5 \cdot 7^{4} + 3 \cdot 7^{5} + 2 \cdot 7^{6} + O(7^{7}),\\
           &= 3 \cdot 7 + 5 \cdot 7^2 + 3 \cdot 7^3 + 5 \cdot 7^4 + 4 \cdot
           7^5 + 2 \cdot 7^6 + O(7^7)\,, 
\end{align*}
where the bold-faced terms are the local contributions away from $p = 7$.
Note that the computation of the local heights at $7$ is somewhat more
involved than in Example~\ref{g2ex1}, because the computation of the relevant Vologodsky integrals
  requires the computation of integrals of differentials of the third kind
  on an elliptic curve with good reduction (reducing to the elliptic curve
  in the stable reduction at $7$), whereas in Example~\ref{g2ex1}, the
  actual Coleman integrals were all computed on $\P^1$.
  
Again, we used~\cite{BiaCod} to show that these results are consistent with~\cite[Corollary~5.32]{Bia23}.

\end{eg}

\begin{eg}\label{QCex}(Quadratic Chabauty) 
According to
\cite[\href{https://www.lmfdb.org/Genus2Curve/Q/85280/d/682240/1}{85280.d.682240.1}]{lmfdb},
the genus 2 curve
  \begin{equation}\label{Ceqn}
C\colon y^2 = x^5 + x^3 - 2x + 1
\end{equation}
has good reduction away from $2,3,5,41$ and its Jacobian $J$ 
satisfies
$$
  J(\Q) = \langle P_1, P_2\rangle \cong \Z\times
  \Z\,;\quad P_1= [(0,1)-(1,1)],\; P_2= [(1,-1)-\infty].
$$
  We now use quadratic Chabauty at the bad prime $p=5$ as outlined
  in~\S\ref{subsec:QC} to show that the set
  of integral points on $C$ consists precisely of the points 
  \begin{equation}\label{intpts}
    (0,\pm 1), (1,\pm1), (-1,\pm 1)\,.
  \end{equation}
If $\chi_v$ is unramified,
  then~\cite[Proposition~3.3]{balakrishnan2016quadratic} implies that the function that
  maps an integral point $P \in C_v(\Z_v)$, to
  $h_v(P-\infty, P-\infty)$, normalised as in
  Remark~\ref{R:common_compute}, is constant on preimages of irreducible
  components of any proper
  regular model of $C_v$ over $\Z_v$, and is identically~0 when there is
  such a model with irreducible special fibre. Hence the same is true for the
  function $\nu_v$ defined in~\eqref{nuv} (see also~\cite[Lemma~7.2]{Bia23}
  and~\cite[Theorem~7.1]{Bia23}, which
  summarises various results from~\cite{muller2016canonical}).
  The equation~\eqref{Ceqn} defines a
  proper regular model for all $v\ne 2$, which implies that the sets
in Theorem~\ref{T:qc} satisfy $\Gamma_v=\{0\}$ for all $v\ne 2$.
  At~2, the minimal proper regular model has four components, all of
  genus~0. The value of $\nu_2$ is always~$0$ for the component
  containing~$0$, since this holds for the local height
  $h_v(P-\infty,P-\infty)$
  by~\cite[Proposition~3.3]{balakrishnan2016quadratic}. The $\Z_2$-integral points
  $(1,1), (-1,1), (4/9, 339/4)$ map to the other three components, respectively,
  and we compute that $\nu_2$ takes the values
  $-{2}\log_p(2)$ and $-\frac{8}{3}\log_p(2)$ there. 
  Thus the set $\Gamma$ in Theorem~\ref{T:qc} is
\begin{equation*}
\Gamma=\Gamma_2 = \left\{0, {2}\log_p(2), \frac{8}{3}\log_p(2)\right\}\,.
\end{equation*}
We work with the bad prime $p=5$, and we choose the subspace $W_5$
  corresponding to the matrix $c=0$. Since $x^5 + x^3 - 2x + 1$ reduces to 
  $(x+2)^2(x^3 + x^2 + 3x + 4)$,~\cite[Figure~1]{bruin_stoll_2010}
  and~\cite[Theorem~7.1]{Bia23} imply
  that $mP$ lies in the kernel of reduction of  $J(\Q_5)$ for any $P\in
  J(\Q_5)$, where $m=(p-1)\#E(\F_5) = 4\cdot 9=36$  and $E/\F_5$ is the
  elliptic curve defined by $y^2 = x^3 + x^2 + 3x + 4$.  In this example,
  the kernel of reduction equals the subgroup $H_5$
  by~\cite[Section~3]{Bia23}. Hence we may always use the factor~$36$ to
  compute the abelian logarithm $\cL$ on $J(\Q_5)$ and the N\'eron function
  $\lambda_5$.   
  We use this to express the global height $h$ as a linear combination of
  the products $\cL_1^2,\cL_1\cL_2,\cL_2^2$ by evaluating these in the
  points $P_1, P_3 = [(0,1)-(1,-1)]\in
  J(\Q)$, which generate a finite index subgroup of
  $J(\Q)$ and which have the property that $P_1,P_3,P_1+P_3\in
  J(\Q)\setminus\Supp(\Theta)$. The latter condition is not necessary if we
  compute the heights and the $\cL_i$ using Vologodsky integration as in
  Section~\ref{sec:Comp_CG_height} and in~\cite{KatzKaya,Kaya}, but is required
  to apply the results of~\cite{Bia23} discussed in Section~\ref{sec:4},
  which is what we used for this example. We find that
  \begin{equation}\label{hlogs}
    h = 
    a_1\cL_1^2-a_2\cL_1\cL_2-a_3\cL_2^2\,,
  \end{equation}
where 
\begin{align*}
  a_1 &=4\cdot 5^{-1} + 1 + 4\cdot 5^3 + 5^4 + 5^6 + O(5^7), \\
  a_2 &=4\cdot 5^{-1} + 3 + 5 + 4\cdot 5^6 + O(5^7), \\ 
  a_3 &=1 + 4\cdot 5 + 5^3 + 4\cdot 5^4 + 3\cdot 5^5 + 4\cdot 5^6 +
  O(5^7)\,.
\end{align*}
Hence the quadratic Chabauty function $\rho$ in Theorem~\ref{T:qc} is
  \begin{equation}\label{rho}
    \rho\colonequals \nu_5 -
    a_1\cL_1^2-a_2\cL_1\cL_2-a_3\cL_2^2\,,
  \end{equation}
  with $a_i$'s as above and $\nu_5$ defined in~\eqref{nuv}.

  In order to solve for the set of all $P\in C(\Z_5)$ such that $\rho(P)\in
  \Gamma$, we need to expand $\rho$ in residue discs. Since no $\Q_5$-point
  lies in the residue disc reducing to the singular point $(3,0)\in
  C(\F_5)$, we only need to consider the discs reducing to $(0, \pm1), (1,
  \pm 1), (-1,\pm 1)$. These all contain at least one integral point
  $(x_0,y_0)$, which
  we may use to parametrize them using the local parameter $z=x-x_0$. Also note that by symmetry, we only need
  to find the roots of $\rho$ in these disc up to the hyperelliptic
  involution. 
  In order to expand the functions $\nu_5$ and $\cL_i$ in the local
  parameter $z$, we need to work over the Laurent series
  ring $\Q[[z]]$ (see for instance~\eqref{lambdavdef}, noting that the
  coefficients of the expansion of the coordinates of a parametric point
  $P(z)$ on $C$ have rational coefficients). In particular, we
  need to perform scalar multiplication by~36 on $J(\Q[[z]])$.
  Rather than using Cantor's algorithm, which does not behave well over
  inexact fields, we instead perform scalar multiplication on the Kummer
  surface $K\colonequals J/\langle -1\rangle$. This is a
  quartic hypersurface that remembers much of the arithmetic on $J$. In
  particular, let $\kappa\colon J\to K\subset \P^3$ be the map
  in~\cite{FlynnSmart}, then, for any $P\in J$ and $n\in \Z$, we can
  compute $\kappa(nP)$ from $\kappa(P)$ by repeatedly evaluating certain
  quartic forms and biquadratic forms as
  in~\cite{FlynnSmart} -- without loss of precision. From $\kappa(nP)$, we can then find the unordered
  pair $\{\pm nP\}$. Since the functions appearing in $\rho$ are even (in
  Grant's formal group parameters $T_1,T_2$), this
  ambiguity is not an issue for us.

  After solving $\rho\in \Gamma$, we find, in addition to the known integral
  points~\eqref{intpts}, approximations of four ``extra'' $p$-adic points on $C$:
  \begin{align*}
    &{\scriptstyle(1 + 2\cdot 5 + 2\cdot 5^3 + 2\cdot 5^4 + 2\cdot 5^5 +
    O(5^6)\, ,\; 1
    + 5 + 4\cdot 5^2 + 3\cdot 5^3
    + 4\cdot 5^4 + 5^5 +  O(5^6)),}\\
    &{\scriptstyle(4\cdot 5 + 2\cdot 5^2 + 3\cdot 5^3 + O(5^6) \,,\; 1 + 5 + 4\cdot
    5^2 + 5^3 + 5^4 + 4\cdot 5^5
    + O(5^6)),}\\
    &{\scriptstyle(4 + 2\cdot 5 + 2\cdot 5^2 + 4\cdot 5^3 + 3\cdot 5^4 + 2\cdot 5^5 + O(5^6)
    \,,\; 4 + 5^2 + 5^3 + 3\cdot 5^4
    + 3\cdot 5^5 + O(5^6)),}\\
    &{\scriptstyle(4 + 5 + 4\cdot 5^2 + 4\cdot 5^3 + 4\cdot 5^4 + O(5^6)
    \,,\; 4 + 3\cdot 5 +
    2\cdot 5^3 + 2\cdot 5^5 + O(5^6))}
  \end{align*}
  It remains to show that these points are not $\Z$-integral.
  To this end we use the Mordell--Weil sieve as
  in~\cite{balakrishnan2017computing}: Assuming that an extra
  point $P$ is actually in $C(\Q)$, there are
  coefficients $a_1,a_2\in\Z$ satisfying 
  $ \iota(P) = a_1P_1+a_2P_2$.
  We use the
  linearity of the abelian logarithms $\cL_1,\cL_2$, again relying on
  Kummer arithmetic to avoid loss of precision in arithmetic in $J(\Q_p)$, to approximate these
  putative coefficients modulo $5^6$, which results in a coset of
  $5^6J(\Q)$ in
  $J(\Q)$ that would have to contain $\iota(P)$.
  We show that the image of
  this coset in $J(\F_{311})/5^6J(\F_{311})$ does not intersect the image of
  $C(\F_{311})$ for any extra point, which finally proves that $C(\Z)$
  consists precisely of the points in~\eqref{intpts}. 

\end{eg}

\subsection*{Competing interest statement} The authors have no competing interests to declare that are relevant to the content of this article.

\subsection*{Data availability statement}

The code to reproduce our computations were written using \texttt{SageMath} \cite{SageMath} and \texttt{Magma} \cite{Magma}, and are available in the public GitHub repository
\[
\text{\url{https://github.com/KayaEnis/padicHeights}}.
\]

\bibliographystyle{amsalpha}
\bibliography{master}

\end{document}